\documentclass[10pt]{amsart}

\usepackage[utf8]{inputenc}
\usepackage[english]{babel}

\usepackage{amsmath}
\usepackage{enumerate}
\usepackage{amsthm,amssymb}
\usepackage{amscd}
\usepackage{color}
\usepackage[a4paper,margin = 1.9cm]{geometry}
\usepackage{hyperref}
\usepackage{comment}
\usepackage{stackrel}

\newtheorem{theorem}{Theorem}
\newtheorem{proposition}[theorem]{Proposition}
\newtheorem{lemma}[theorem]{Lemma}
\newtheorem{corollary}[theorem]{Corollary}

\newtheorem*{assumption}{Assumption}

\numberwithin{theorem}{section}

\numberwithin{equation}{section}

\title{Concentration for Coulomb gases
on compact manifolds}

\author{David García-Zelada}

\begin{document}

\maketitle

\begin{abstract}
We study the non-asymptotic behavior
of a Coulomb gas on a compact Riemannian manifold.
This gas is a symmetric
n-particle Gibbs measure associated
to the two-body interaction energy given by
the Green function.
We encode such a particle system by using
an empirical measure.
Our main result is 
a concentration inequality
in Kantorovich-Wasserstein distance inspired
from the work of Chafaï, Hardy and Maïda on
the Euclidean space. Their proof
involves large deviation techniques
together with an energy-distance comparison
and a regularization procedure
based on the superharmonicity
of the Green function. 
This last ingredient is not available on a manifold.
We solve this problem
by using the heat kernel and its short-time asymptotic
behavior.
\\
\ \\
{\bf Keywords:} Gibbs measure; Green function;
Coulomb gas; empirical measure; concentration
of measure; 
interacting particle system;
singular potential; 
heat kernel.
\ \\
{\bf 2010 Mathematics Subject Classification: }
60B05; 26D10; 35K05.

\end{abstract}

\section{Introduction}
\label{sec: coulomb}

We shall consider the model of a Coulomb gas 
on a Riemannian manifold introduced in 
\cite[Subsection 4.1]{ldp-david} and study
its non-asymptotic behavior by obtaining a 
concentration inequality for the
empirical measure around its limit.
Let us describe
the model and the main theorem of 
this article.

Let $(M,g)$ be a 
compact Riemannian manifold of volume form $\pi$.
We suppose, for simplicity, 
that $\pi(M) = 1$ so that
$\pi \in \mathcal P(M)$ where 
$\mathcal P(M)$ denotes the space
of probability measures on $M$.
We endow $\mathcal P(M)$ with the 
topology of weak convergence,
i.e. the smallest topology
such that
$\mu \to \int_M f d\mu $ is continuous for every
continuous function $f:M \to \mathbb R$.
Denote by $\Delta: C^\infty(M) \to C^\infty(M)$
the Laplace-Beltrami operator on $(M,g)$. 
We shall say that
	$$G:M \times M \to ( - \infty, \infty]$$ 
is a Green function for $\Delta$ if
it is 
a symmetric continuous function
such that
for every $x \in M$ the function
$G_x: M \to ( - \infty, \infty]$ defined
by $G_x(y) = G(x,y)$ is integrable and
	$$\Delta G_x = -\delta_x + 1$$
in a distributional sense.
It can be proven
that	
such a $G$ is integrable with respect 
to $\pi \otimes \pi$ and that if $f \in C^\infty(M)$
then $\psi: M \to \mathbb R$,
defined by 
	$$\psi(x)=\int_M G(x,y)f(y)d\pi(y),$$
satisfies that	
\begin{equation}
\label{eq: solution}
	\psi \in C^\infty(M)
	\ \ \ \mbox{ and } \ \ \
	\Delta \psi = - f +  \int_M f(y) d\pi(y).
\end{equation}	
In particular, 
$\int_M G_x d\pi$ does not depend on $x \in M$
and the Green function 
is unique up to an additive constant.
See \cite[Chapter 4]	{green-aubin} for a proof
of these results.		
We will denote by 
$G$ the Green function for $\Delta$
such that
\begin{equation}
\label{eq: zerointegral}
	\int_M G_x d\pi = 0
\end{equation}	
for every $x \in M$.

For $x \in M$ the function
$G_x$ may be thought of as 
the potential generated by a particle located
at $x$ in the manifold $M$ and
a `uniform' negative background charge.
The total energy
of a system of $n$ particles of charge
$1/n$ (each particle coming
with a `uniform' negative background charge)
would be $H_n: M^n \to (-\infty,\infty]$ 
defined by
	$$H_n(x_1,...,x_n)
	 = \frac{1}{n^2}\sum_{i < j}	G(x_i, x_j). $$
Take a sequence
$\{\beta_n\}_{n \geq 2}$ of non-negative numbers and consider the sequence of Gibbs probability
measures	 
\begin{equation}
\label{eq: gibbs}
	d\mathbb P_n(x_1,...,x_n) = \frac{1}{Z_n} 
	e^{-\beta_n H_n(x_1,...,x_n)} 
	d \pi^{\otimes_n}(x_1,...,x_n) 
\end{equation}	
where $Z_n$ is such that $\mathbb P_n(M^n) = 1$.
This can be thought of as the Riemannian generalization
of the usual Coulomb gas model described
in \cite{coulomb-serfaty} or
\cite{concentration-chafai}. In 
the particular case of the round
two-dimensional sphere, it is known
(see \cite{spherical-krishnapur}) that
if $\beta_n = 4\pi n^2$
the probability measure $\mathbb P_n$ describes
the eigenvalues of the quotient
of two independent 
$n \times n$ matrices with independent 
Gaussian entries.
Define $H: \mathcal P(M) \to (-\infty,\infty]$
by
	$$H(\mu) =\frac{1}{2} \int_{M \times M} G(x,y)
					 d\mu(x) d\mu(y).$$
This is a convex lower semicontinuous function
(see \cite[Subsection 4.1]{ldp-david}).		
Let $i_n : M^n \to \mathcal P(M) $
be defined by
	$$i_n(x_1,...,x_n) =
	\frac{1}{n} \sum_{i=1}^n \delta_{x_i}.$$
If $\beta_n/n \to \infty$, 
the author in \cite{ldp-david}
tells us that 
$\{{i_n}_*(\mathbb P_n)\}_{n \geq 2}$ satisfy
a large deviation principle with 
speed $\beta_n$ and rate function					
$H - \inf H$. 	 

In particular, if $F$ is a closed set
of $\mathcal P(M)$ we have
	$$\limsup_{n \to \infty} \frac{1}{\beta_n}\log 
	\mathbb P_n(i_n^{-1}(F)) 
	\leq -  \inf_{\mu \in F} 
	\left(H(\mu)- \inf H \right).$$
or, equivalently
\begin{equation}
\label{eq: ldp}	
	\mathbb P_n(i_n^{-1}(F)) \leq
	\exp \left(-\beta_n
	\inf_{\mu \in F} 
	\left(H (\mu)- \inf H \right)
	+ o(\beta_n) \right)	.
\end{equation}	
The aim of this article is to understand
the $o(\beta_n)$ term
for some family of closed sets $F$. Suppose
we choose some metric $d$ in $\mathcal P(M)$
that induces the topology of weak convergence.
The unique minimizer of
$H$ is
$\mu_{\mathrm {eq}} = \pi$ (see 
Theorem \ref{theo: assumption H compact manifold}) 
so a nice family of closed sets
are the sets
	$$F_r = \{\mu \in \mathcal P(M) : \
	d(\mu, \mu_{\mathrm {eq}}) \geq r \} $$
for $r > 0$.	
As $H$ is lower semicontinuous we 
have that
$\inf_{\mu \in F_r} 
\left(H (\mu)- \inf H \right)$
is strictly positive
and the large deviation inequality is not vacuous.	
We would like a simple expression
in terms of $r$ for the leading
term, so instead of using
$\inf_{\mu \in F_r} 
\left(H (\mu)- \inf H \right)$
we will use a simple function of $r$.

Let $d_g$ denote the
Riemannian distance. 
The metric
we shall use on $\mathcal P(M)$ is
$W_1:\mathcal P(M) \times \mathcal P(M)
\to \mathbb [0, \infty)$
defined by
\begin{equation}
\label{eq: distance}
W_1(\mu, \nu) = \inf \left\{ \int_{M \times M}
		d_g(x,y) d\Pi(x,y) :\
		\Pi \mbox{ is a coupling between $\mu$ and
		$\nu$ }
		\right\}
\end{equation}		
and it is known as the Wasserstein or Kantorovich 
metric.
See \cite[Theorem 7.12]{optimal-villani} for a 
proof that it metrizes
the topology of weak convergence.
The main result of this article is the following.

\begin{theorem}[Concentration inequality
for Coulomb gases]
\label{theo: maintheorem}
Let $d$ be the dimension of $M$.
If $d=2$
there exists a constant $C>0$ 
that does not depend on the
sequence $\{\beta_n\}_{n \geq 2}$ such that for 
every $n\geq 2$	and $r \geq 0$
$$	\mathbb P_n \left( W_1(i_n,
	\pi ) \geq r
		\right)
	\leq
	\exp\left({-\beta_n 
		 \frac{r^2}{4}
		+\frac{\beta_n}{8\pi} \frac{\log(n)}{n}
		+
		C \frac{\beta_n}{n} 
		}\right) .$$					
If $d\geq 3$
there exists a constant $C>0$ 
that does not depend on the
sequence $\{\beta_n\}_{n \geq 2}$
such that for 
every $n \geq 2$ and $r \geq 0$				
$$	\mathbb P_n \left( W_1(i_n,
	\pi ) \geq r
		\right)
	\leq
	\exp\left({-\beta_n 
		 \frac{r^2}{4}
		 +
		C \frac{\beta_n}{n^{2/d}} 
		}\right) .$$		
\end{theorem}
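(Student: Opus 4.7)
\medskip
\noindent\textbf{Sketch of the proof strategy.} My plan is to adapt the scheme of Chafaï, Hardy and Maïda while replacing their ball-averaging regularization (which relies on the superharmonicity of the two-dimensional Coulomb kernel) by heat-kernel regularization. Writing the Gibbs probability as
$$
  \mathbb{P}_n\bigl(W_1(i_n,\pi)\geq r\bigr)
  \;=\; \frac{1}{Z_n}\int_{\{W_1(i_n,\pi)\geq r\}} e^{-\beta_n H_n}\, d\pi^{\otimes n},
$$
Jensen's inequality together with the normalization \eqref{eq: zerointegral} (which forces $\int H_n\, d\pi^{\otimes n} = 0$) immediately yields $Z_n\geq 1$. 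The task then reduces to a deterministic lower bound on $H_n$ on every configuration whose empirical measure lies at $W_1$-distance at least $r$ from $\pi$.

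To obtain such a bound, I will smooth the empirical measure through the heat semigroup, setting $\mu^{t}_n = \frac{1}{n}\sum_{i=1}^n p_t(x_i,\cdot)\,\pi$ for a small scale $t>0$ to be chosen. Using the representation $G(x,y)=\int_0^\infty(p_s(x,y)-1)\,ds$ and the semigroup property, a direct computation gives
$$
  \iint G\, d\mu^{t}_n\, d\mu^{t}_n
  \;=\; \frac{1}{n^2}\sum_{i,j=1}^n G_{2t}(x_i,x_j),
  \qquad G_{2t}(x,y)\;=\;\int_{2t}^\infty (p_s(x,y)-1)\, ds.
$$
Non-negativity of $p_s$ yields $G(x,y)\geq G_{2t}(x,y)-2t$ pointwise, so summing over $i<j$ leads to
$$
  H_n(x_1,\ldots,x_n)
  \;\geq\; H(\mu^{t}_n) \;-\; \frac{1}{2n}\sup_{x\in M}G_{2t}(x,x) \;-\; t
  \;=:\; H(\mu^{t}_n) - \epsilon_n(t).
$$
The Minakshisundaram-Pleijel short-time asymptotic $p_s(x,x)\sim (4\pi s)^{-d/2}$ controls the self-energy $\sup_x G_{2t}(x,x)$: it is $\tfrac{1}{4\pi}\log(1/t)+O(1)$ when $d=2$ and $O(t^{1-d/2})$ when $d\geq 3$, giving the two regimes that appear in the statement.

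The last ingredient is the energy-distance comparison $H(\mu)-H(\pi)\geq \tfrac12 W_1(\mu,\pi)^2$, which I would derive by writing $\Delta u=\mu-\pi$ via \eqref{eq: solution}, observing that $H(\mu)-H(\pi)=\tfrac12\|\nabla u\|_{L^2(\pi)}^2$, and combining Kantorovich-Rubinstein duality with Cauchy-Schwarz:
$$
  W_1(\mu,\pi) \;=\; \sup_{\|\nabla f\|_\infty\leq 1}\int \nabla f\cdot\nabla u\, d\pi \;\leq\; \|\nabla u\|_{L^2(\pi)}.
$$
A short computation using Gaussian-type concentration of $p_t(x,\cdot)$ around $x$ at scale $\sqrt t$ gives $W_1(\mu^{t}_n, i_n)\leq C\sqrt t$, and hence on the event $\{W_1(i_n,\pi)\geq r\}$ one has
$$
  H_n \;\geq\; \tfrac12\bigl(r - C\sqrt t\bigr)^2 - \epsilon_n(t) \;\geq\; \tfrac{r^2}{4} - C^2 t - \epsilon_n(t),
$$
using the elementary bound $(r-b)^2\geq r^2/2 - b^2$. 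Inserting this and $Z_n\geq 1$ into the Gibbs formula and then choosing $t=1/n$ when $d=2$ and $t=n^{-2/d}$ when $d\geq 3$ balances the two error contributions and produces the stated inequalities. The step I expect to be the most delicate is the uniform control $W_1(\mu^{t}_n, i_n)\leq C\sqrt t$: without translation invariance one must bound the first moment of $p_t(x,\cdot)$ against $d_g(x,\cdot)$ uniformly in $x\in M$ through the short-time heat-kernel estimates on a compact Riemannian manifold, and it is precisely this point that replaces the superharmonicity argument available only in the flat case.
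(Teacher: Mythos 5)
Your proposal follows the paper's own proof essentially step for step: the lower bound $Z_n \geq 1$ via Jensen together with the normalization $\int_M G_x\,d\pi = 0$; the heat-kernel regularization $R_t$ and the integral representations $G = \int_0^\infty (p_s - 1)\,ds$ and $G_t = \int_{2t}^\infty (p_s - 1)\,ds$ yielding the off-diagonal lower bound $G \geq G_t - 2t$ and the diagonal short-time asymptotics; the energy--distance comparison $\tfrac12 W_1(\mu,\pi)^2 \leq H(\mu)-H(\pi)$ obtained from $\Delta u = -(\mu-\pi)$, integration by parts and Kantorovich--Rubinstein; the estimate $W_1(R_t(\vec x), i_n(\vec x)) \leq C\sqrt t$; and the choice $t = n^{-2/d}$. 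The only differences are cosmetic: you substitute the specialization $\mu_{\mathrm{eq}}=\pi$ directly instead of invoking the abstract Theorem~\ref{theo: general concentration} (whose relative-entropy term $D(\mu_{\mathrm{eq}}\|\pi)$ vanishes here), and you would prove $W_1(R_t, i_n)\leq C\sqrt t$ by Gaussian heat-kernel tail bounds where the paper uses the radial-process/It\^o argument of Lemma~\ref{lem: browniandistance}.
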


In fact, by a slight modification
we will also prove the following 
generalization.

\begin{theorem}[Concentration inequality
for Coulomb gases in a potential]
\label{theo: potential}
Take a twice
continuously differentiable function
$V:M \to \mathbb R$ 
and define
$$H_n(x_1,...,x_n) = \frac{1}{n^2} \sum_{i<j}
G(x_i,x_j) + \frac{1}{n} \sum_{i=1}^n V(x_i) \ \
\mbox{ and }
\ \ 
H(\mu) = \frac{1}{2}\int_{M \times M}
 G(x,y) d\mu(x) d\mu(y)
 + \int_M V(x) d\mu(x).$$
Then $H$ has a unique minimizer
that will be called $\mu_{eq}$.
Suppose $\mathbb P_n$ is defined by
(\ref{eq: gibbs})
and let $d$ be the dimension of $M$.
If $d=2$ 
there exists a constant $C>0$ 
that does not depend on the
sequence $\{\beta_n\}_{n \geq 2}$ such that for 
every $n\geq 2$	and $r \geq 0$
$$	\mathbb P_n \left( W_1(i_n,
\mu_{\mathrm{eq}} ) \geq r
		\right)
	\leq
	\exp\left({-\beta_n 
		 \frac{r^2}{4}
		+\frac{\beta_n}{8\pi} \frac{\log(n)}{n}
		+ n D(\mu_{eq}\| \pi)
		+
		C \frac{\beta_n}{n} 
		}\right) .$$					
If $d\geq 3$
there exists a constant $C>0$ 
that does not depend on the
sequence $\{\beta_n\}_{n \geq 2}$
such that for 
every $n \geq 2$ and $r \geq 0$				
$$	\mathbb P_n \left( W_1(i_n,
\mu_{\mathrm{eq}} ) \geq r
		\right)
	\leq
	\exp\left({-\beta_n 
		 \frac{r^2}{4}
		 + n D(\mu_{eq}\| \pi)
		 +
		C \frac{\beta_n}{n^{2/d}} 
		}\right) .$$		
\end{theorem}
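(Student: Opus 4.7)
The argument follows the Chafaï--Hardy--Maïda scheme \cite{concentration-chafai}, adapted from Euclidean space to a compact Riemannian manifold: I plan to combine a deterministic energy--distance inequality with the explicit Gibbs density, and handle the diagonal singularity of $G$ by regularization. Where \cite{concentration-chafai} exploits superharmonicity of the Newtonian kernel, one must substitute the heat semigroup associated to $\Delta$.

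\emph{Step 1: energy--distance comparison.} I would first show that for every $\mu\in\mathcal P(M)$,
\[
W_1(\mu,\mu_{eq})^2 \;\le\; 4\bigl(H(\mu)-H(\mu_{eq})\bigr).
\]
By Kantorovich--Rubinstein duality, this reduces to bounding $\int f\,d(\mu-\mu_{eq})$ for every $1$-Lipschitz $f$. Setting $\psi_f(x) := \int G(x,y)f(y)\,d\pi(y)$, property \eqref{eq: solution} gives $-\Delta\psi_f = f-\bar f$ and $\int\psi_f\,d\pi = 0$. Rewriting $\int f\,d(\mu-\mu_{eq})$ as an inner product in the positive semidefinite quadratic form induced by $G$ on zero-mass signed measures, Cauchy--Schwarz together with the Lipschitz bound $\int|\nabla\psi_f|^2\,d\pi \le 1$ then yields the claim.

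\emph{Step 2: heat--kernel regularization.} Because $G$ is singular on the diagonal, Step 1 cannot be applied to $i_n$ directly. My plan is to fix $t>0$, use the heat kernel $p_t$ of $(M,g)$, and consider $\nu_{n,t}(dy) := \bigl(\tfrac{1}{n}\sum_i p_t(x_i,y)\bigr)\,d\pi(y)\in\mathcal P(M)$. A Brownian coupling gives $W_1(i_n,\nu_{n,t})\le C\sqrt t$ deterministically. Expanding,
\[
H(\nu_{n,t}) \;=\; H_n(x_1,\ldots,x_n) \;+\; \tfrac{1}{2n}\,\kappa(t) \;+\; o_t(1),
\qquad
\kappa(t) := \sup_{x\in M}\int G(u,v)\,p_{t/2}(x,u)\,p_{t/2}(x,v)\,d\pi(u)d\pi(v).
\]
The short-time asymptotics $p_t(x,y)\sim(4\pi t)^{-d/2}\exp(-d_g(x,y)^2/(4t))$, together with the singular behavior of $G$ near the diagonal, should yield $\kappa(t) \sim -\tfrac{1}{4\pi}\log t$ for $d=2$ and $\kappa(t)=O(t^{1-d/2})$ for $d\ge 3$.

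\emph{Step 3: assembling and optimizing.} Starting from $\mathbb P_n(A) = Z_n^{-1}\int_A e^{-\beta_n H_n}\,d\pi^{\otimes n}$, I would bound $Z_n$ below by inserting the trial density $\bigl(d\mu_{eq}/d\pi\bigr)^{\otimes n}$ and invoking Jensen, which produces precisely the $nD(\mu_{eq}\|\pi)$ correction. On the event $\{W_1(i_n,\mu_{eq})\ge r\}$, the triangle inequality combined with Step~1 applied to $\nu_{n,t}$ and with Step~2 forces
\[
H_n(x_1,\ldots,x_n) \;\ge\; H(\mu_{eq})+\frac{r^2}{4}-\frac{\kappa(t)}{2n}-\text{cross terms in }\sqrt t,
\]
where the cross terms can be absorbed into an $O(1/n^{2/d})$ correction uniformly in $r$ (using that $r$ is bounded by $\mathrm{diam}(M)$). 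Choosing $t=n^{-2}$ in $d=2$ produces the $\tfrac{\beta_n\log n}{8\pi n}$ correction; choosing $t=n^{-2/d}$ in $d\ge 3$ produces $C\beta_n/n^{2/d}$. The chief obstacle is Step~2: extracting the sharp constant $1/(8\pi)$ in two dimensions and a uniform $C$ in higher dimensions requires precise control of $\kappa(t)$ from the on-diagonal heat-kernel expansion and the Hadamard parametrix for $G$, with all error terms independent of the sequence $\{\beta_n\}_{n\ge 2}$.
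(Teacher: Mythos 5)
The proposal captures the correct three-part scheme (energy--distance inequality, heat-kernel regularization, Jensen lower bound on $Z_n$), which is indeed the paper's approach. But there are three concrete problems.

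\textbf{Step 1 is set up incorrectly.} You define $\psi_f$ as the Green potential of the test function $f$ and assert $\int_M|\nabla\psi_f|^2\,d\pi\le 1$ when $f$ is $1$-Lipschitz. That is not true: with $-\Delta\psi_f=f-\bar f$ one has $\int|\nabla\psi_f|^2\,d\pi=\int\int G(x,y)f(x)f(y)\,d\pi\,d\pi-(\int f\,d\pi)^2$, which is not controlled by $\|\nabla f\|_\infty$. The correct object is the potential $U$ of the signed measure $\mu-\mu_{\mathrm{eq}}$, i.e.\ $U(x)=\int G(x,y)\,d(\mu-\mu_{\mathrm{eq}})(y)$, so that $-\Delta U=\mu-\mu_{\mathrm{eq}}$; integration by parts gives
$\int f\,d(\mu-\mu_{\mathrm{eq}})=\int\langle\nabla f,\nabla U\rangle\,d\pi\le\|\nabla f\|_\infty\|\nabla U\|_2$ and $\|\nabla U\|_2^2=\mathcal E(\mu-\mu_{\mathrm{eq}})$. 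You then still need the separate fact $\mathcal E(\mu-\mu_{\mathrm{eq}})\le\mathcal E(\mu)-\mathcal E(\mu_{\mathrm{eq}})$, which rests on $\mu_{\mathrm{eq}}$ being a minimizer (first-order optimality), and is not a consequence of Cauchy--Schwarz alone. Your proposal conflates the two.

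\textbf{The choice $t=n^{-2}$ in $d=2$ is wrong.} With the diagonal asymptotics $\kappa(t)\sim-\tfrac1{4\pi}\log t$, the resulting correction is $\tfrac{\kappa(t)}{2n}\sim-\tfrac{\log t}{8\pi n}$. Taking $t=n^{-2}$ produces $\tfrac{\log n}{4\pi n}$, twice the coefficient $\tfrac1{8\pi}$ claimed in the statement. The regularization scale must be $t=n^{-1}$ (that is, $t=n^{-2/d}$ with $d=2$, the same choice uniformly in $d$ as in the $d\ge3$ case); this is exactly what balances the $W_1$-error $C\sqrt t\sim C/\sqrt n$ and the diagonal divergence so that both give contributions of order $\beta_n/n$.

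\textbf{The $V$-term is not addressed, yet it is the only new ingredient of this theorem over Theorem~\ref{theo: maintheorem}.} When you regularize, $H(\nu_{n,t})$ contains $\int V\,d\nu_{n,t}=\tfrac1n\sum_i\int V\,d\mu_{x_i}^t$, and you need the quantitative comparison $\bigl|\int V\,d\mu_x^t-V(x)\bigr|\le\|\Delta V\|_\infty\,t$, which follows from Dynkin's formula $\mathbb E[V(X_t)]=V(x)+\int_0^t\mathbb E[\Delta V(X_s)]\,ds$ for the Markov process with generator $\Delta$. You silently subsume this into an unexplained $o_t(1)$. Similarly, the quantity $e_n-e$ (which vanished in the potential-free case) is now $-\tfrac{1}{2n}\mathcal E(\mu_{\mathrm{eq}})=O(1/n)$ and must be verified separately before being absorbed into the constant $C$.
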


To prove Theorem \ref{theo: maintheorem} 
we follow \cite{concentration-chafai}
in turn inspired
by \cite{concentration-maurel-segala} 
(see also \cite{energy-serfaty}).
We proceed in three steps.
The first part, described
in Section \ref{sec: general}, 
may be used in any measurable
space but it demands an energy-distance
comparison
and a regularization procedure.
The energy-distance comparison
will be explained in Section
\ref{sec: transport} and it may be extended
to include Green functions of some
Laplace-type operators.
The regularization 
by the heat kernel,
in Section \ref{sec: regularization},
will use a short time asymptotic
expansion. It may
apply to more general kind of energies
where a short-time asymptotic expansion
of their heat kernel is known. 
Having acquired all the tools, Section
\ref{sec: final} will complete the proof
of Theorem \ref{theo: maintheorem}
and, by a slight modification,
Theorem \ref{theo: potential}.

\section{Link to an energy-distance
comparison and a regularization procedure}
\label{sec: general}

In this section $M$ may be any measurable space, 
$\pi$ any probability measure on $M$ and
$H_n: M^n \to (-\infty, \infty]$ 
any measurable function bounded from below.
Given $\beta_n > 0$ we define the Gibbs
probability measure by (\ref{eq: gibbs}).
Let $H: \mathcal P(M) \to (-\infty, \infty]$
be any function that has a unique minimizer
$\mu_{\mathrm{eq}}
 \in \mathcal P(M)$. This shall be thought of as
a rate function of some Laplace principle
as in \cite{ldp-david}. 
Consider a metric 
	$$d:\mathcal P(M) \times \mathcal P(M)
	\to [0,\infty)$$ 
on $\mathcal P(M)$
that induces the topology of weak convergence and
define
	$$F_r = \{\mu \in \mathcal P(M): \
	d(\mu, \mu_{\mathrm{eq}}) \geq r \} $$
for $r > 0$.	 We want to understand
a non-asymptotic inequality similar to 
(\ref{eq: ldp}) with an explicit
$o(\beta_n)$ term. 
For this, we consider the following assumption.
\begin{assumption}
We will say that
an increasing convex function
$f:[0,\infty) \to [0, \infty)$ 
satisfies assumption $A$ if,
for all $\mu \in \mathcal P(M)$,
\begin{equation}
\label{eq: A}
\tag{A}
f\left(d(\mu, \mu_{\mathrm{eq}} ) \right)
\leq H(\mu) - H(\mu_{\mathrm{eq}}).
\end{equation}
\end{assumption}
Under assumption \ref{eq: A}, (\ref{eq: ldp})
implies
$$\mathbb P_n(i_n^{-1}(F_r)) \leq
	\exp \left(-\beta_n
	f(r)
	+ o(\beta_n) \right)	.$$ 
We would like to understand the
$o(\beta_n)$ term and
find a bound that does not depend on $r$.
Denote by 
$D(\cdot\| \pi ) 
: \mathcal P(M) \to (-\infty, \infty]$
the 
relative entropy of $\mu$ with respect to $\pi$,
also known as the Kullback–Leibler divergence,
i.e. $D(\mu\|\pi) = \int_M \rho \log \rho \, d\pi$
if $d\mu = \rho\,  d\pi$ and it is
infinity when $\mu$ is not absolutely continuous
with respect to $\pi$.	
The following result is the first part
of the method mentioned in
Section \ref{sec: coulomb}.

\begin{theorem}[General concentration inequality]
\label{theo: general concentration}

Suppose we have two real
numbers
$a_n$ and $b_n$ such that
there exists a measurable function	
$R:M^n \to \mathcal P(M)$
with the following property

$\bullet$ for every $\vec x = (x_1,...,x_n) \in M^n$
we have
	$$H_n(x_1,...,x_n) \geq
		 H(R(\vec x)) - 
		a_n,  
		\ \ \ \mbox{ and }
		\ \ \ 
		d(R(\vec x), i_n(\vec x)) \leq b_n.$$
\\
Let us denote 
$e_n =\int_{M^n} H_n d\mu_{\mathrm{eq}}^{\otimes_n}$
and $e = H(\mu_{\mathrm{eq}})$.
If $f:[0,\infty) \to [0,\infty)$
is an increasing convex function
that satisfies assumption \ref{eq: A}
then	
$$	\mathbb P_n \left( d(i_n,\mu_{\mathrm{eq}} ) 
	\geq r
		\right)
	\leq
	\exp \left(-\beta_n 
		 2 f \left( \frac{r}{2} \right)+
		 n  D(\mu_{\mathrm{eq}}\| \pi) +
		 \beta_n \left(e_n - e \right) +
		\beta_n a_n
				+ \beta_n f(b_n) \right) .$$	
	
\begin{proof}	

We first prove the two following results.
The first lemma is the analogue of
\cite[Lemma 4.1]{concentration-chafai}.

\begin{lemma}[Lower bound of the partition function]
We have the following lower bound.
	$$Z_n \geq 
	\exp \left(- \beta_n e_n
	-n D(\mu_{\mathrm{eq}} \| \pi) \right)		.$$
\begin{proof}
If $d\mu_{\mathrm{eq}} = \rho_{\mathrm{eq}} d\pi$
we have
\begin{align*}
	Z_n &= \int_{M^n} 
	e^{-\beta_n H_n(x_1,...,x_n)} 
	d \pi^{\otimes_n}(x_1,...,x_n) 					\\
	&\geq
	\int_{M^n}
	e^{-\beta_n H_n(x_1,...,x_n)} 
	e^{-\sum_{i=1}^n 1_{\rho_{\mathrm{eq}} > 0}(x_i)
		\log \rho_{\mathrm{eq}}(x_i)}
	d \mu_{\mathrm{eq}}
	^{\otimes_n}(x_1,...,x_n) 				\\
	&\geq
	\int_{M^n}
	e^{-\beta_n H_n(x_1,...,x_n)
	- \sum_{i=1}^n 1_{\rho_{\mathrm{eq}} > 0}(x_i)
		\log \rho_{\mathrm{eq}}(x_i)} 
	d \mu_{\mathrm{eq}}
	^{\otimes_n}(x_1,...,x_n) 				\\
	&\geq
	e^{- \int_{M^n} \left( \beta_n 
	H_n(x_1,...,x_n)
	+\sum_{i=1}^n 1_{\rho_{\mathrm{eq}} > 0}(x_i)
		\log \rho_{\mathrm{eq}}(x_i)  	\right)
		d \mu_{\mathrm{eq}}
		^{\otimes_n}(x_1,...,x_n) 	 } 		\\
	& =
	e^{- \beta_n e_n
	-n D(\mu_{\mathrm{eq}} \| \pi)}				\\	
\end{align*}
where we have used Jensen's inequality to
get the last inequality.
\end{proof}				
				
\end{lemma}

The second lemma will help us in the step
of regularization.

\begin{lemma}[Comparison]
\label{lem: reg vs nonreg}
Take
$\vec x = (x_1,...,x_n) \in M^n$.
If $d(R(\vec x), i_n(\vec x)) \leq b_n$ then
	$$
	f(d(R(\vec x), \mu_{\mathrm{eq}})) \geq 
	2 \, f \left(
	\frac{d(i_n(\vec x), \mu_{\mathrm{eq}})}{2} \right)
	- f(b_n) .$$

\begin{proof}
As 
	$$d(i_n(\vec x),\mu_{\mathrm{eq}}))
	\leq d(i_n(\vec x), R(\vec x)) +
		d(R(\vec x), \mu_{\mathrm{eq}})$$
we have that		
\begin{align*}
	f\left(\frac{d(i_n(\vec x),\mu_{\mathrm{eq}}))}
	{2}\right)
	& \leq f
	\left(\frac{1}{2}d(i_n(\vec x), R(\vec x)) +
		\frac{1}{2}d(R(\vec x), \mu_{\mathrm{eq}})
		\right)		\\
	& \leq
	\frac{1}{2}f\left( d(i_n(\vec x), R(\vec x)) \right)
	+
	\frac{1}{2}f\left( d(R(\vec x), \mu_{\mathrm{eq}})
	 \right)	 \\
	& \leq
	\frac{1}{2}f\left( b_n \right)
	+
	\frac{1}{2}f\left( d(R(\vec x), \mu_{\mathrm{eq}})
	 \right)	
\end{align*}
where we have used that $f$ is increasing
and convex.

\end{proof}
\end{lemma}
\ \\
Now,
define
	$$A_r = \{\vec x \in M^n/\ 
	d(i_n(\vec x), \mu_{\mathrm{eq}} ) \geq r \}.$$
Then
\begin{align*}	
\mathbb P_n(A_r) 
	&=\frac{1}{Z_n} 
	\int_{A_r} e^{-\beta_n H_n(x_1,...,x_n)} 
	d \pi^{\otimes_n}(x_1,...,x_n)					\\ 
													&
	\leq	
	e^{\beta_n e_n
	+ n D(\mu_{\mathrm{eq}} \| \pi)}	
	 \int_{A_r} e^{-\beta_n H(R(\vec x)) + 
			\beta_n f(n)  } 
	d \pi^{\otimes_n}(x_1,...,x_n)					\\
													&
	\leq	
	e^{\beta_n e_n + \beta_n a_n + n  
				D(\mu_{\mathrm{eq}}\| \pi)  }
	 \int_{A_r} e^{-\beta_n H(R(\vec x))} 
	d \pi^{\otimes_n}(x_1,...,x_n)					\\
		&		
	\stackrel{(*)}{\leq	}
	e^{\beta_n \left(e_n - e \right) + \beta_n a_n + n  
				D(\mu_{\mathrm{eq}}\| \pi)  }
	 \int_{A_r} e^{-\beta_n 
	 f(d(R(\vec x),\mu_{\mathrm{eq}} ))	 } 
	d \pi^{\otimes_n}(x_1,...,x_n)					\\	
		&		
	\stackrel{(**)}{\leq	}
	e^{\beta_n \left(e_n - e \right) +
	\beta_n a_n + n  
				D(\mu_{\mathrm{eq}}\| \pi)  }
	 \int_{A_r} e^{-\beta_n 
	 2 f \left(\frac{d(i_n(\vec x, 
	 \mu_{\mathrm{eq}}))}{2
	 }\right)
	 \, +\, \beta_n f(b_n)}
	d \pi^{\otimes_n}(x_1,...,x_n)					\\	
				&		
	\stackrel{(***)}{\leq}	
	e^{\beta_n \left(e_n - e \right) + 
	\beta_n a_n + n  D(\mu_{\mathrm{eq}}\| \pi)  }
	  e^{-\beta_n 2 f \left( \frac{r}{2} \right)
				+ \beta_n f(b_n)} 
					\\	
							&		
	\leq	
	e^{-\beta_n 
		 2 f \left( \frac{r}{2} \right)+
		 n  D(\mu_{\mathrm{eq}}\| \pi) +
		 \beta_n \left(e_n - e \right) +
		\beta_n a_n
				+ \beta_n f(b_n) 
				}
\end{align*}
where in $(*)$ we have used Assumption
\ref{eq: A}, in $(**)$ we have used
Lemma \ref{lem: reg vs nonreg} and
in $(***)$ we have used the monotonicity
of $f$.

\end{proof}

\end{theorem}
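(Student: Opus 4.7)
The plan is to follow a three-step scheme tailored to the form of the conclusion: a lower bound on the partition function $Z_n$ (which produces the $nD(\mu_{\mathrm{eq}}\|\pi) + \beta_n(e_n - e)$ contribution), a comparison lemma turning the regularized control on $R(\vec x)$ into control on $i_n(\vec x)$ (producing the $\beta_n f(b_n)$ correction), and finally a direct estimate on the numerator of $\mathbb{P}_n(A_r)$ that combines these with assumption \ref{eq: A}.

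First I would prove $Z_n \geq \exp(-\beta_n e_n - nD(\mu_{\mathrm{eq}}\|\pi))$ by changing measure from $d\pi^{\otimes n}$ to $d\mu_{\mathrm{eq}}^{\otimes n}$ via the density $\rho_{\mathrm{eq}}$, writing $d\pi = \rho_{\mathrm{eq}}^{-1} d\mu_{\mathrm{eq}}$ on the $\mu_{\mathrm{eq}}^{\otimes n}$-full measure set $\{\rho_{\mathrm{eq}}>0\}^n$, and then applying Jensen's inequality to the convex function $e^{-(\cdot)}$. The indicator $\mathbf 1_{\rho_{\mathrm{eq}}>0}$ in the logarithm handles the possible vanishing of $\rho_{\mathrm{eq}}$, and the Jensen step produces exactly $-\beta_n e_n - n D(\mu_{\mathrm{eq}}\|\pi)$ in the exponent.

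Next I would establish a comparison lemma: the triangle inequality gives $d(i_n(\vec x), \mu_{\mathrm{eq}}) \leq d(i_n(\vec x), R(\vec x)) + d(R(\vec x), \mu_{\mathrm{eq}}) \leq b_n + d(R(\vec x), \mu_{\mathrm{eq}})$, hence by convexity and monotonicity of $f$,
$$f\left( \frac{d(i_n(\vec x), \mu_{\mathrm{eq}})}{2} \right) \leq \frac{1}{2} f(b_n) + \frac{1}{2} f(d(R(\vec x), \mu_{\mathrm{eq}})),$$
which rearranges to $f(d(R(\vec x), \mu_{\mathrm{eq}})) \geq 2 f(d(i_n(\vec x), \mu_{\mathrm{eq}})/2) - f(b_n)$.

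Finally, I would write $\mathbb{P}_n(A_r) = Z_n^{-1} \int_{A_r} e^{-\beta_n H_n}\, d\pi^{\otimes n}$ and chain the estimates: substituting the lower bound on $Z_n$ introduces $e^{\beta_n e_n + nD(\mu_{\mathrm{eq}}\|\pi)}$; replacing $H_n(\vec x)$ by $H(R(\vec x)) - a_n$ using the first hypothesis contributes $e^{\beta_n a_n}$; applying assumption \ref{eq: A} to $R(\vec x)$ gives $H(R(\vec x)) - e \geq f(d(R(\vec x), \mu_{\mathrm{eq}}))$; the comparison lemma then bounds this below by $2 f(d(i_n(\vec x), \mu_{\mathrm{eq}})/2) - f(b_n)$; and on $A_r$ the inequality $d(i_n(\vec x), \mu_{\mathrm{eq}}) \geq r$ together with monotonicity of $f$ yields $f(d(i_n(\vec x), \mu_{\mathrm{eq}})/2) \geq f(r/2)$. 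The remaining integrand no longer depends on $\vec x$, so the integral of $1$ against $\pi^{\otimes n}$ is $1$, and collecting the exponents gives the claimed bound. The only subtle step is the Jensen lower bound for $Z_n$ with a possibly non-strictly-positive $\rho_{\mathrm{eq}}$; everything else is a careful composition of the hypotheses.
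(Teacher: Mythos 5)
Your proposal is correct and follows the paper's proof essentially step for step: the Jensen-based lower bound on $Z_n$, the triangle-inequality-plus-convexity comparison lemma relating $f(d(R(\vec x),\mu_{\mathrm{eq}}))$ to $f(d(i_n(\vec x),\mu_{\mathrm{eq}})/2)$, and the final chain of inequalities on $\mathbb P_n(A_r)$ are exactly the three ingredients the paper assembles, in the same order. No gaps; this is the same argument.
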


In the next section we return to the case of a compact
Riemannian manifold and study a energy-distance
comparison that will imply Assumption 
\ref{eq: A}.

\section{Energy-distance comparison
in compact Riemannian manifolds}
\label{sec: transport}	

We take the notation used in Section
\ref{sec: coulomb}.
The Kantorovich metric $W_1$ defined
in (\ref{eq: distance}) can be written as
	$$W_1(\mu, \nu)
	= \sup \left\{	
	\int_M f d\mu - \int_M f d\nu :
	\ \|f\|_{\mathrm{Lip}} \leq 1 \right\}		$$
where $$\|f\|_{\mathrm{Lip}} = \sup_{x \neq y}
 	\frac{|f(x) - f(y)|}{d_g(x,y)}.$$
This result is known as the 
Kantorovich-Rubinstein theorem
(see \cite[Theorem 1.14]{optimal-villani}).
In the case of a Riemannian manifold,
by a smooth approximation argument 
such as the one in \cite{smooth-azagra},
we can prove that 
	$$W_1(\mu, \nu)
	= \sup \left\{	
	\int_M f d\mu - \int_M f d\nu :
	f \in C^{\infty}(M) \mbox{ and }
	\ \|\nabla f\|_{\infty} \leq 1 \right\}		.$$	
The next theorem gives the energy-distance
comparison required to satisfy Assumption 
\ref{eq: A}. This is the analogue of
\cite[Theorem 1.3]{concentration-maurel-segala}
and \cite[Lemma 3.1]{concentration-chafai}.

\begin{theorem}[Comparison between 
distance and energy]

\label{theo: assumption H compact manifold}
Suppose that $\mu_{\mathrm{eq}} \in \mathcal P(M)$ 
is a probability measure on $M$ such that
$H(\mu_{\mathrm{eq}}) \leq H(\mu)$ for every
$\mu \in \mathcal P(M)$. Then
\begin{equation}
\label{eq: energy-distance}
	\frac{1}{2}W_1(\mu,\mu_{\mathrm{eq}})^2 \leq 
	H(\mu) - H(\mu_{eq})
\end{equation}	
for every $\mu \in \mathcal P(M)$.
This implies, in particular, 
that $H$ has a unique minimizer and
that
Assumption \ref{eq: A}
is satisfied by $f(r) = \frac{r^2}{2}$.
Furthermore, $\mu_{\mathrm{eq}} = \pi$.
\end{theorem}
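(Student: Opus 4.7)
The plan is to use the smooth Kantorovich--Rubinstein formula displayed just above the theorem statement and reduce the $W_1$-estimate to controlling the quadratic form $\sigma\mapsto\int G\,d\sigma\,d\sigma$ by one integration by parts plus Cauchy--Schwarz. The guiding identity will be that, for a signed measure $\sigma$ on $M$ of zero total mass with smooth enough density, the potential $U^{\sigma}(x)=\int G(x,y)\,d\sigma(y)$ satisfies $\Delta U^{\sigma}=-\sigma$ by (\ref{eq: solution}), and Green's formula on the closed manifold gives
\begin{equation*}
\int_{M\times M} G\,d\sigma\,d\sigma \;=\; \int_{M} U^{\sigma}\,d\sigma \;=\; -\int_{M} U^{\sigma}\,\Delta U^{\sigma}\,d\pi \;=\; \int_{M}|\nabla U^{\sigma}|^{2}\,d\pi.
\end{equation*}

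My first step would be to pin down the minimizer. The normalization (\ref{eq: zerointegral}) gives $U^{\pi}\equiv 0$, so writing $\mu=\pi+\sigma$ with $\sigma=\mu-\pi$ and using symmetry of $G$ yields
\begin{equation*}
H(\mu)-H(\pi) \;=\; \int_{M} U^{\pi}\,d\sigma + \tfrac{1}{2} \int_{M\times M} G\,d\sigma\,d\sigma \;=\; \tfrac{1}{2} \int_{M\times M} G\,d\sigma\,d\sigma.
\end{equation*}
The previous display makes this nonnegative and equal to zero only when $\nabla U^{\sigma}=0$, which in turn forces $\sigma=-\Delta U^{\sigma}=0$. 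So $\pi$ is the unique minimizer of $H$; any $\mu_{\mathrm{eq}}$ satisfying the hypothesis of the theorem hence equals $\pi$, and the target bound reduces to $\tfrac{1}{2} W_1(\mu,\pi)^{2}\le \tfrac{1}{2}\int G\,d\sigma\,d\sigma$.

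For the $W_1$-side, I would take $f\in C^{\infty}(M)$ with $\|\nabla f\|_{\infty}\le 1$ and integrate by parts to get $\int f\,d\sigma = -\int f\,\Delta U^{\sigma}\,d\pi = \int \nabla f\cdot\nabla U^{\sigma}\,d\pi$. Cauchy--Schwarz, together with $\pi(M)=1$ and the identity from the first paragraph, then give
\begin{equation*}
\int_{M} f\,d\sigma \;\le\; \|\nabla f\|_{L^{2}(\pi)}\,\|\nabla U^{\sigma}\|_{L^{2}(\pi)} \;\le\; \left(\int_{M\times M} G\,d\sigma\,d\sigma\right)^{1/2}.
\end{equation*}
Taking the supremum over such $f$ via the smooth Kantorovich--Rubinstein formula would yield $W_1(\mu,\pi)^{2}\le \int G\,d\sigma\,d\sigma$, exactly the desired inequality.

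The hard part will be making the integration by parts rigorous for an arbitrary $\mu\in\mathcal P(M)$ with $H(\mu)<\infty$, since such a $\mu$ need not have a smooth density and $U^{\sigma}$ is then not classically smooth (the case $H(\mu)=\infty$ is vacuous). I would handle this by regularizing through the heat semigroup $(P_{t})_{t>0}$: replace $\mu$ by $\mu_{t}=P_{t}\mu$, which has a $C^{\infty}$ density for every $t>0$, run the calculation above for $\sigma_{t}=\mu_{t}-\pi$, and let $t\to 0^{+}$, using weak convergence $\mu_{t}\to\mu$ (hence $W_{1}(\mu_{t},\pi)\to W_{1}(\mu,\pi)$) together with $H(\mu_{t})\to H(\mu)$ via the spectral/heat-kernel representation of $G$ that is developed later in Section~\ref{sec: regularization}.
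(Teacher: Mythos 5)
Your proposal is correct and uses the same analytic core as the paper — the integration-by-parts identity $\int f\,d\sigma = \int \nabla f\cdot\nabla U^{\sigma}\,d\pi$ together with Cauchy--Schwarz and $\|\nabla U^{\sigma}\|_{L^2(\pi)}^2 = \mathcal E(\sigma)$ is exactly the content of the paper's Lemma~\ref{lem: inequality of distances}, and both handle the irregularity of a general $\mu$ by regularizing (the paper appeals to \cite{cbeltran} for approximating sequences; your heat-semigroup approach works equally well since $G_t \le G + 2t$ plus Fatou gives $\mathcal{E}(P_t\mu)\to\mathcal{E}(\mu)$). Where you genuinely diverge is in the energy bookkeeping: you identify $\mu_{\mathrm{eq}}=\pi$ at the outset using $\int_M G_x\,d\pi = 0$, which forces $\int G\,d\mu\,d\pi = 0$ and turns the decomposition $\mathcal E(\mu-\pi) = \mathcal E(\mu) - 2\int G\,d\mu\,d\pi + \mathcal E(\pi)$ into the \emph{equality} $\mathcal E(\mu-\pi) = \mathcal E(\mu) - \mathcal E(\pi)$. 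The paper instead proves a separate variational lemma (Lemma~\ref{lem: inequality of energies}) that gives only the \emph{inequality} $\mathcal E(\mu-\mu_{\mathrm{eq}}) \le \mathcal E(\mu) - \mathcal E(\mu_{\mathrm{eq}})$ for \emph{any} minimizer of \emph{any} two-body energy, and only afterwards characterizes $\mu_{\mathrm{eq}}=\pi$ in the pure Coulomb case. Your route is shorter and cleaner for Theorem~\ref{theo: assumption H compact manifold} itself, but the paper's modular Lemma~\ref{lem: inequality of energies} is what is re-used verbatim in the proof of Theorem~\ref{theo: potential}, where an external potential $V$ is added and the minimizer is no longer $\pi$; your argument would need a separate case there since the cross-term no longer vanishes. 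Finally, your uniqueness argument via $\nabla U^{\sigma}=0\Rightarrow\sigma=0$ needs the same regularization caveat to be rigorous, whereas the paper derives uniqueness for free from the already-proved $W_1$ bound — a slightly safer route, though both are fine.
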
	

Let
$\mathcal F$
be the space of finite signed measures
$\mu$
on $M$ such that
$\int_{M \times M} 
G(x,y) d |\mu|(x) d |\mu|(y) < \infty$.
For convenience we shall define
$\mathcal E: 
\mathcal F  \to (-\infty, \infty]$
by
	$$\mathcal E(\mu) = 
	 \int_{M \times M} G(x,y)
					 d\mu(x) d\mu(y)	$$
so that $\mathcal E(\mu) = 2 H(\mu)$
whenever
$\mu \in \mathcal P(M) 
\cap \mathcal F$. We can also
notice that if
$\mu, \nu \in \mathcal P(M)$ are such 
that $H(\mu)$ and $H(\nu)$ are finite then
$\int G(x,y)d\mu(x) d\nu(y)< \infty$
by the convexity of $H$,
the measure $\mu-\nu$ belongs to 
$ \mathcal F$ and
\begin{equation}
\label{eq: binomiocuadrado}
\mathcal E(\mu-\nu)
=\mathcal E(\mu) + \mathcal E(\nu) 
- 2 \int G(x,y) d\mu(x) d\nu(y).
\end{equation}

We begin by proving the following result that
may be seen as a comparison of distances 
where the `energy distance'
between two probability measures
$\mu,\nu \in \mathcal P(M)$ of finite
energy is defined
as
$\sqrt {\mathcal E(\mu - \nu)}$.	
This is the analogue of
\cite[Theorem 1.1]{concentration-chafai}.
\begin{lemma}[Comparison of distances]
\label{lem: inequality of distances}
Let $\mu, \nu \in \mathcal P(M)$
such that $H(\mu)$ and
$H(\nu)$ are finite. Then
	$$W_1(\mu,\nu) 
	\leq \sqrt {\mathcal E (\mu - \nu)}.$$
\end{lemma}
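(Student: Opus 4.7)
The plan is to combine the smooth Kantorovich-Rubinstein representation of $W_1$ with an integration by parts against the Green function, then close with Cauchy-Schwarz. Set $\sigma = \mu - \nu$, which is a finite signed measure of total mass zero belonging to $\mathcal{F}$ (by the hypotheses and the remark preceding the lemma), and define its potential
$$u_\sigma(x) = \int_M G(x,y)\, d\sigma(y).$$
By the mass-zero condition, $\Delta u_\sigma = -\sigma$ in the distributional sense.

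Starting from the smooth dual formula recalled just before the lemma, fix $f \in C^\infty(M)$ with $\|\nabla f\|_\infty \leq 1$. The key identity is the one coming from the defining equation $\Delta G_x = -\delta_x + 1$: applied to a smooth test function $\varphi$, it gives $\varphi(x) - \int \varphi \, d\pi = -\int G(x,y)\Delta \varphi(y)\, d\pi(y)$. I would apply this with $\varphi = f$ and integrate against $\sigma$; since $\sigma(M)=0$, Fubini yields
$$\int_M f\, d\sigma \;=\; -\int_M u_\sigma(y)\, \Delta f(y)\, d\pi(y) \;=\; \int_M \nabla u_\sigma \cdot \nabla f \, d\pi,$$
the last step being Green's formula on the closed manifold $M$. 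Then Cauchy-Schwarz, together with $\|\nabla f\|_\infty \leq 1$ and the fact that $\pi$ is a probability measure, gives
$$\left| \int_M f\, d\sigma \right| \;\leq\; \int_M |\nabla u_\sigma|\, d\pi \;\leq\; \left( \int_M |\nabla u_\sigma|^2\, d\pi \right)^{1/2}.$$
A second integration by parts identifies the right-hand side with the energy:
$$\int_M |\nabla u_\sigma|^2\, d\pi \;=\; -\int_M u_\sigma\, \Delta u_\sigma\, d\pi \;=\; \int_M u_\sigma\, d\sigma \;=\; \mathcal{E}(\sigma).$$
Taking the supremum over admissible $f$ then yields $W_1(\mu,\nu)^2 \leq \mathcal{E}(\mu-\nu)$.

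The main obstacle is regularity. The identity $\Delta G_x = -\delta_x + 1$ and the two integrations by parts all work cleanly when $\sigma$ has a smooth density, but for arbitrary finite-energy $\mu, \nu \in \mathcal{P}(M)$ the potential $u_\sigma$ need not even be continuous, and the manipulations above are only formal. To circumvent this I would regularize: replace $\mu$ and $\nu$ by their convolutions $\mu_t, \nu_t$ with the heat kernel at time $t>0$ (the same device used in Section~\ref{sec: regularization}). Then $\sigma_t = \mu_t - \nu_t$ has a smooth density, $u_{\sigma_t}$ is smooth by (\ref{eq: solution}) applied to $\Delta$, and the chain of identities and inequalities above is rigorous, giving $W_1(\mu_t,\nu_t) \leq \sqrt{\mathcal{E}(\mu_t - \nu_t)}$.

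To conclude I would let $t \to 0^+$. The left-hand side is handled by continuity of $W_1$: $\mu_t \to \mu$ and $\nu_t \to \nu$ weakly, hence $W_1(\mu_t, \nu_t) \to W_1(\mu, \nu)$ on the compact manifold (where $d_g$ is bounded). For the right-hand side, using the spectral expansion of the heat semigroup $P_t$ and the formal identity $\mathcal{E}(\sigma_t) = \langle P_t \sigma, G * P_t \sigma\rangle$, one checks that regularizing the measure decreases (or at worst preserves) the energy, so $\mathcal{E}(\sigma_t) \leq \mathcal{E}(\sigma)$, or alternatively one uses lower semicontinuity of $\mathcal{E}$ from \cite{ldp-david} combined with a direct computation showing $\mathcal{E}(\sigma_t) \to \mathcal{E}(\sigma)$. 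Either route delivers the desired inequality for $\mu, \nu$ themselves.
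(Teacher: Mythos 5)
Your proof is correct, and the core computation is exactly the paper's: represent $W_1$ by the smooth Kantorovich--Rubinstein formula, integrate by parts against the Green potential $U=u_\sigma$, apply Cauchy--Schwarz, and identify $\|\nabla U\|_{L^2(\pi)}^2$ with $\mathcal E(\mu-\nu)$ via a second integration by parts. The one place you genuinely depart from the paper is the approximation step needed to make these manipulations rigorous for nonsmooth $\mu,\nu$. The paper simply invokes \cite{cbeltran} for the existence of differentiable probability measures $\mu_n\to\mu$, $\nu_n\to\nu$ with $\mathcal E(\mu_n)\to\mathcal E(\mu)$, $\mathcal E(\nu_n)\to\mathcal E(\nu)$, and passes to the limit. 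You instead regularize by the heat semigroup, which is more self-contained and meshes with the machinery of Section~\ref{sec: regularization}; your claim that $\mathcal E(\sigma_t)\le\mathcal E(\sigma)$ with $\mathcal E(\sigma_t)\to\mathcal E(\sigma)$ as $t\to 0^+$ is indeed correct and is cleanest to see either from the spectral expansion
\begin{equation*}
\mathcal E(\sigma_t)=\sum_{k\ge 1}\frac{e^{-2\lambda_k t}}{\lambda_k}\left|\langle e_k,\sigma\rangle\right|^2
\end{equation*}
or from the representation $\mathcal E(\sigma_t)=\int_{2t}^\infty\!\int\!\int p_s\,d\sigma\,d\sigma\,ds$ of Proposition~\ref{prop: integralregularizedgreen}, whose integrand is nonnegative since $\sigma(M)=0$; monotone convergence then gives the limit. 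This makes the energy half of the limit argument completely explicit, whereas the paper outsources it. For the $W_1$ half you also need that weak convergence $\mu_t\to\mu$ implies $W_1(\mu_t,\mu)\to 0$, which holds on compact $M$; it is worth stating explicitly since on noncompact spaces extra tightness hypotheses would be needed. In short: same proof strategy, but a different and arguably tidier regularization device in the approximation-to-the-limit step.
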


\begin{proof}
First suppose $\mu$ and $\nu$ differentiable,
i.e. suppose they have a differentiable density
with respect to $\pi$. Define
$U: M \to \mathbb R $ by
	$$U(x) = \int_M G(x,y) \left(d\mu(y) - d \nu(y)\right).$$
Then, as remarked in (\ref{eq: solution}),
we know that $U$ is differentiable and
	$$\Delta U = - \left(\mu - \nu \right).$$
Take $f \in C^\infty(M)$ such that 
$\|\nabla f\|_{\infty} \leq 1$. We can see that
	$$\int_M f \left( d\mu - d\nu \right)
	= -\int_M f \Delta U
	= \int_M \langle \nabla f, \nabla U \rangle d \pi
	\leq
	\| \nabla f\|_2 \|\nabla U \|_2
	\leq 
	\| \nabla f\|_{\infty} \|\nabla U \|_2.$$
We also know that
	$$(\|\nabla U \|_2)^2 = 
	\int_M \langle \nabla U , \nabla U \rangle d \pi
	=
	- \int_M U \Delta U
	=
	\int_M U (d\mu - d\nu)
	= \mathcal E(\mu - \nu).
	 $$
Then,
	$$\int_M f \left( d\mu - d\nu \right)
	 \leq 
	 \| \nabla f\|_{\infty} \|\nabla U \|_2
	 \leq
	  \| \nabla f\|_{\infty} \,
	  \sqrt{ \mathcal E(\mu - \nu)}.$$
This implies that
	$$W_1(\mu,\nu) \leq 
	\sqrt{\mathcal E(\mu - \nu)}.
	  $$
	  
In general, let $\mu, \nu \in \mathcal P(M)$
such that $H(\mu)$ and $H(\nu)$ are finite.
Take two sequences $\{\mu_n\}_{n \in \mathbb N}$
and $ \{\nu_n\}_{n \in \mathbb N}$ 
of differentiable probability measures
that converge to $\mu$ and $\nu$
respectively and such that 
$\mathcal E(\mu_n) \to \mathcal E(\mu)$ and
$\mathcal E(\nu_n) \to \mathcal E(\nu)$
(see \cite{cbeltran} for a proof of their existence)
and proceed by a limit argument.

\end{proof}

The next step to prove
Theorem \ref{theo: assumption H compact manifold}
is a fact that works for general two-body interactions
i.e. $G$ is not necessarily a Green function.
\begin{lemma}[Comparison of energies]
\label{lem: inequality of energies}
Suppose that $\mu_{\mathrm{eq}}$
is a probability measure such that
$H(\mu_{\mathrm{eq}}) \leq H(\mu)$
for every $\mu \in \mathcal P(M)$. Then,
for every $\mu \in \mathcal P(M)$ 
such that
$H(\mu)< \infty$, we have
	$$
	\mathcal E (\mu - \mu_{\mathrm{eq}}) \leq 
	\mathcal E(\mu) - 
	\mathcal E(\mu_{\mathrm{eq}}).$$ 
\end{lemma}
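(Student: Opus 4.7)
The plan is to use the standard first-order optimality condition for the minimizer $\mu_{\mathrm{eq}}$. By identity (\ref{eq: binomiocuadrado}), which is available since $H(\mu)$ and $H(\mu_{\mathrm{eq}})$ are both finite, the desired inequality
$$\mathcal{E}(\mu - \mu_{\mathrm{eq}}) \leq \mathcal{E}(\mu) - \mathcal{E}(\mu_{\mathrm{eq}})$$
is, after expansion, equivalent to
$$\mathcal{E}(\mu_{\mathrm{eq}}) \leq \int_{M\times M} G(x,y)\,d\mu(x)\,d\mu_{\mathrm{eq}}(y).$$
So the whole task reduces to establishing this cross-term inequality.

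To get it, I would consider the convex interpolation $\mu_t := (1-t)\mu_{\mathrm{eq}} + t\mu$ for $t \in [0,1]$. Each $\mu_t$ lies in $\mathcal{P}(M)$, and by bilinearity of the integration against $G$,
$$2H(\mu_t) = (1-t)^2\,\mathcal{E}(\mu_{\mathrm{eq}}) + 2t(1-t)\int G\,d\mu\,d\mu_{\mathrm{eq}} + t^2\,\mathcal{E}(\mu).$$
All three terms on the right are finite: $\mathcal{E}(\mu_{\mathrm{eq}})$ and $\mathcal{E}(\mu)$ by hypothesis, and the cross term by the remark preceding (\ref{eq: binomiocuadrado}) (the convexity of $H$ ensures finiteness of the mixed integral). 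Hence $t \mapsto H(\mu_t)$ is a polynomial on $[0,1]$.

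Now I would use the minimizing property of $\mu_{\mathrm{eq}}$: since $H(\mu_t) \geq H(\mu_{\mathrm{eq}})$ for all $t \in [0,1]$ with equality at $t=0$, the right derivative at $t = 0$ must be non-negative. Computing this derivative from the quadratic expression above yields
$$0 \leq \tfrac{d}{dt}\Big|_{t=0^+}H(\mu_t) = -\mathcal{E}(\mu_{\mathrm{eq}}) + \int G\,d\mu\,d\mu_{\mathrm{eq}},$$
which is exactly the cross-term inequality we needed. Substituting back into the expansion of $\mathcal{E}(\mu - \mu_{\mathrm{eq}})$ via (\ref{eq: binomiocuadrado}) concludes the proof.

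I do not expect a real obstacle here — the argument is essentially one line once the interpolation is set up. The only subtlety worth being careful about is the finiteness of the cross integral $\int G\,d\mu\,d\mu_{\mathrm{eq}}$, which justifies differentiating the polynomial in $t$; this is precisely what the paper's remark before (\ref{eq: binomiocuadrado}) supplies.
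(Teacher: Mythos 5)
Your proof is correct and follows essentially the same route as the paper: reduce the claim via the expansion (\ref{eq: binomiocuadrado}) to the cross-term inequality $\int G\,d\mu\,d\mu_{\mathrm{eq}} \geq \mathcal{E}(\mu_{\mathrm{eq}})$, then deduce it from first-order optimality of $\mu_{\mathrm{eq}}$ along the segment $\mu_t = (1-t)\mu_{\mathrm{eq}} + t\mu$. The paper phrases this last step as a contradiction (a negative linear term would give $\mathcal{E}(\mu_t) < \mathcal{E}(\mu_{\mathrm{eq}})$ for small $t$) rather than as a non-negative right derivative, but the underlying argument is identical.
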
	

\begin{proof}
As $H(\mu)$ and 
$H(\mu_{\mathrm{eq}})$
are finite we use
(\ref{eq: binomiocuadrado})
to notice that
the affirmation
	$$
	\mathcal E (\mu - \mu_{\mathrm{eq}}) \leq 
	\mathcal E(\mu) - 
	\mathcal E(\mu_{\mathrm{eq}})$$
is equivalent to
	$$	 \int_{M \times M}G(x,y)d\mu(x) 
	d\mu_{\mathrm{eq}}(y)
	\geq 
	 \mathcal E(\mu_{\mathrm{eq}}).$$
But, if	
	$$	 \int_{M \times M}G(x,y)d\mu(x) 
	d\mu_{\mathrm{eq}}(y)
	<  \mathcal E(\mu_{\mathrm{eq}})$$
were true then, defining
 $\mu_t = (1-t) \mu_{\mathrm{eq}} + t \mu
= \mu_{\mathrm{eq}} + t (\mu - 
\mu_{\mathrm{eq}})$, we would see
that the linear term  of 
$\mathcal E(\mu_t)$ is 
$\int_{M \times M}G(x,y)d\mu(x) 
d\mu_{\mathrm{eq}}(y)
	-  
	\mathcal E(\mu_{\mathrm{eq}}) < 0$. This means that
$\mathcal E(\mu_t) < 
\mathcal E(\mu_{\mathrm{eq}})$ for
$t>0$ small which is a contradiction.
	
\end{proof}

Now we may complete the proof of Theorem
\ref{theo: assumption H compact manifold}.

\begin{proof}[Proof of Theorem 
\ref{theo: assumption H compact manifold}]
Let $\mu_{\mathrm{eq}}$ be a minimizer of $H$
and let $\mu \in \mathcal P(M)$ be a probability
measure on $M$. If $H(\mu)$ is infinite there
is nothing to prove. If it is not,
by Lemma \ref{lem: inequality of distances} and
\ref{lem: inequality of energies}
we conclude (\ref{eq: energy-distance}). 

To prove that $H$ has a unique minimizer
suppose $\tilde \mu_{\mathrm{eq}}$ is another minimizer
and use Inequality
(\ref{eq: energy-distance}) with 
$\mu = \tilde \mu_{\mathrm{eq}}$ to get
$W_1(\tilde \mu_{\mathrm{eq}}, \mu_{eq}) = 0$ 
and, thus,
$\tilde \mu_{\mathrm{eq}} = \mu_{\mathrm{eq}}$. 

Finally, to see
that $\mu_{\mathrm{eq}}=\pi$ we use 
(\ref{eq: zerointegral}).
Then 
$\mathcal E(\mu - \pi) = \mathcal E(\mu) - 
\mathcal E(\pi)$ when
$\mu$ has finite energy. But by Lemma 
\ref{lem: inequality of distances} we know that
$\mathcal E(\mu - \pi) \geq 0$ and then
$\mathcal E(\mu) \geq \mathcal E(\pi)$
for every $\mu \in \mathcal P(M)$ of finite energy.

\end{proof}

In the next section we study a way
to regularize the empirical measures in
the sense of the hypotheses of Theorem
\ref{theo: general concentration}.

\section{Heat kernel regularization of the energy}
	
\label{sec: regularization}	
	
In this section the main tool is the heat kernel
for $\Delta$. A proof of
the following proposition may be found in
\cite[Chapter VI]{eigenvalues-chavel}.

\begin{proposition}[Heat kernel]
\label{proposition: heat}
There exists a unique differentiable function
$p:(0,\infty) \times M \times M \to \mathbb R $
such that
	$$\frac{\partial}{\partial t} p_t(x,y)
	= \Delta_y \, p_t(x,y) \ \ \ \mbox{and}
	\ \ \ \lim_{t \to 0} p_t(x, \cdot) = \delta_x$$
for every $x,y \in M$ and $t>0$.
Such a function will be called the 
heat kernel for $\Delta$.
It is non-negative,
it is mass preserving, i.e.
$$\int_M p_t(x,y) d\pi(y) = 1$$
for every $x \in M$ and $t > 0$,
it is symmetric, i.e.
	$$p_t(x,y) = p_t(y,x)$$ 
for every
$x, y \in M$ and $t > 0$
and it satisfies the semigroup property
i.e.
	$$\int_M p_t(x,y) p_s(y,z) d\pi(y)
	= p_{t+s}(x,z)$$ 
	for every $x,y \in M$ and $t, s > 0$.
Furthermore, 	
	$$\lim_{t \to \infty} p_t(x,y) = 1$$
uniformly on $x$ and $y$.

\end{proposition}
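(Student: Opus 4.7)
The plan is to construct $p_t$ from the spectral decomposition of $-\Delta$ on $(M,g)$ and then verify each stated property in turn. Since $M$ is compact, $-\Delta$ acting on $L^2(M,\pi)$ has a discrete spectrum $0 = \lambda_0 < \lambda_1 \leq \lambda_2 \leq \cdots$ tending to infinity, with an orthonormal basis $\{\phi_k\}_{k \geq 0}$ of smooth eigenfunctions; since $\pi(M)=1$, one may take $\phi_0 \equiv 1$. I would set
$$p_t(x,y) = \sum_{k=0}^{\infty} e^{-\lambda_k t}\, \phi_k(x)\, \phi_k(y).$$
Using Weyl's asymptotic $\lambda_k \sim c\, k^{2/d}$ together with classical sup-norm bounds on $\phi_k$ in terms of powers of $\lambda_k$, every derivative of the partial sums converges uniformly on $[\varepsilon, \infty) \times M \times M$ for each $\varepsilon > 0$, so $p$ is smooth on $(0,\infty) \times M \times M$.

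Given this representation most properties become immediate. Symmetry $p_t(x,y) = p_t(y,x)$ is visible from the formula and the heat equation $\partial_t p_t = \Delta_y p_t$ follows by term-by-term differentiation. The semigroup property reduces to orthonormality $\int_M \phi_k \phi_j\, d\pi = \delta_{kj}$, which collapses the double sum. For mass preservation, $\int_M \phi_k\, d\pi = 0$ for $k \geq 1$ by orthogonality to $\phi_0 \equiv 1$, while $\int_M \phi_0\, d\pi = 1$, so $\int_M p_t(x,y)\, d\pi(y) = \phi_0(x) = 1$. The long-time statement $p_t(x,y) \to 1$ uniformly follows from $p_t(x,y) - 1 = \sum_{k \geq 1} e^{-\lambda_k t} \phi_k(x)\phi_k(y)$, whose sup-norm is controlled by $e^{-\lambda_1(t-1)}$ times the (finite) sup-norm of $p_1$ via the semigroup property. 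For the initial condition, Parseval gives $\int_M p_t(x,y) f(y)\, d\pi(y) = \sum_k e^{-\lambda_k t} \hat f_k\, \phi_k(x) \to f(x)$ for every $f \in C^\infty(M)$, which is precisely the claimed weak convergence to $\delta_x$.

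The main obstacle is non-negativity, which is not visible from the spectral formula. I would establish it via the parabolic maximum principle. For a smooth $u_0 \geq 0$, the function $u(t,y) = \int_M p_t(x,y)\, u_0(x)\, d\pi(x)$ solves $\partial_t u = \Delta_y u$ on $(0,\infty) \times M$ and tends uniformly to $u_0$ as $t \to 0$. Setting $v = u + \varepsilon(1+t)$ and supposing $v$ first vanishes at $(t_0, y_0)$, one has $\partial_t v(t_0,y_0) \leq 0$ and $\Delta_y v(t_0,y_0) \geq 0$, while $\partial_t v - \Delta_y v = \varepsilon > 0$, a contradiction. Letting $\varepsilon \to 0$ gives $u \geq 0$, and taking $u_0$ ranging over a non-negative smooth approximation of $\delta_x$ then yields $p_t(x,y) \geq 0$. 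Finally, uniqueness of $p$ among smooth solutions with the stated initial condition follows from the energy identity $\frac{d}{dt} \int_M w^2\, d\pi = -2 \int_M |\nabla w|^2\, d\pi \leq 0$ applied to the difference of two candidates.
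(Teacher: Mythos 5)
The paper does not prove this proposition at all: it simply cites Chavel's \emph{Eigenvalues in Riemannian Geometry}, Chapter~VI, where the heat kernel is built by the parametrix (Minakshisundaram--Pleijel) method and Duhamel's principle. Your construction via the spectral sum $p_t(x,y)=\sum_k e^{-\lambda_k t}\phi_k(x)\phi_k(y)$ is the other classical route, and it is a clean match for the properties this proposition actually lists: symmetry, the semigroup identity, mass conservation, and the long-time decay all drop out of orthonormality in one line, whereas the parametrix construction has to work for each of these. The trade-off is that the parametrix route gives the short-time diagonal asymptotics $p_t(x,x)\sim (4\pi t)^{-d/2}(1+\cdots)$ for free, which is exactly what the paper needs later in Proposition~4.6, while the spectral route (as you correctly observe) gives no visible sign information and forces you to invoke the parabolic maximum principle for non-negativity. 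Your maximum-principle argument and the verification of the other properties are sound.

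There is one genuine gap, in the uniqueness step. Applying the energy identity to $w_t(y)=p_t(x,y)-q_t(x,y)$ shows that $t\mapsto\|w_t\|_{L^2}^2$ is non-increasing on $(0,\infty)$, but to conclude $w\equiv 0$ you need $\|w_t\|_{L^2}\to 0$ as $t\to 0^+$, and this does \emph{not} follow from both $p_t(x,\cdot)$ and $q_t(x,\cdot)$ converging distributionally to $\delta_x$: two families can share the same weak-$*$ limit while their difference stays large, or even blows up, in $L^2$ (think of two centered Gaussians with different scalings of the variance). The fix is standard and one step away: mollify first. For smooth $u_0$ set $u_t=\int p_t(\cdot,y)\,u_0(y)\,d\pi(y)$ and $v_t$ likewise with $q$; both tend to $u_0$ uniformly (hence in $L^2$) as $t\to 0$, so the energy identity applied to $u-v$ gives $u\equiv v$, and since $u_0$ is arbitrary and $p,q$ are continuous, $p=q$. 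Alternatively, run the Duhamel identity $p_t-q_t=\int_0^t\partial_s\bigl(\int_M p_s(x,y)q_{t-s}(y,z)\,d\pi(y)\bigr)\,ds$. Either repair closes the argument.
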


Let $p$
be the heat kernel associated to $\Delta$.
For each point $x \in M$ and $t > 0$ define
the probability measure
$\mu_x^t \in \mathcal P(M)$
 by 
\begin{equation}
\label{eq: mu}
	d\mu_x^t = p_t(x,\cdot) d\pi,
\end{equation}	
 or, more precisely,
 $d\mu_x^t(y) = p_t(x,y) d\pi (y)$.
Then we define
$R_t: M^n \to \mathcal P(M)$ by
	$$R_t(x_1,...,x_n)= 
	\frac{1}{n} \sum_{i=1}^n \mu_{x_i}^t$$
and we want to find $a_n$ and $b_n$ of
the hypotheses of
Theorem \ref{theo: general concentration}
for $R = R_t$.	

We begin by looking for $b_n$.

\subsection{Distance
to the regularized measure}

\begin{proposition}[Distance to
the regularized measure]
\label{prop: bn}
There exists a constant $C>0$ such that
for all $t > 0$ and $\vec x \in M^n$
	$$W_1(R_t(\vec x), i_n(\vec x))
	\leq C \sqrt t.$$

\begin{proof}
The following arguments
are very similar to those in \cite{matching-ledoux} 
and they will be repeated 
for convenience of the reader.
As $W_1: \mathcal P(M) \times \mathcal P(M) \to [0, \infty)$ is the supremum of linear functions,
it is convex. So
	$$W_1(R_t(\vec x), i_n(\vec x))
	\leq \frac{1}{n} \sum_{i=1}^n
	 	W_1(\delta_{x_i}, \mu_{x_i}^t)
	$$
Then, we will try to find a 
constant $C>0$ such that $W_1(\delta_x, \mu_x^t)
\leq C \sqrt t$
for every $x \in M$. As the only coupling
between $\delta_x$ and $\mu_x^t$ is
their product we see that
	$$W_1(\delta_x, \mu_x^t) 
		= \int_M d_g(x,y) d\mu_x^t(y).$$
In fact we will study 
the 2-Kantorovich distance between $\delta_x$
and $\mu_x^t$
\begin{align*}
	D_t(x)
		& = \int_M d_g(x,y)^2 d\mu_x^t(y) 		\\
		& = \int_M d_g(x,y)^2 p_t(x,y) d \pi(y). 
\end{align*}		
If we prove that there exists a constant
$C > 0$ such that for every $x \in M$
\begin{equation}
\label{eq: Dinequality}
	D_t(x) \leq C^2 t
\end{equation}	
we may conclude that
$W_1(\delta_x, \mu_x^t) \leq C \sqrt t$ for every
$x \in M$
by Jensen's inequality.
To get (\ref{eq: Dinequality}) we use
the following lemma which proof
may be found in 
\cite[Section 3.4]{stochastic-hsu} and
\cite[Theorem 3.5.1]{stochastic-hsu}.

\begin{lemma}[Radial process representation]
\label{lem: browniandistance}
Take $x \in M$. Let $X$ be the Markov process
with generator $\Delta$ starting at $x$
(i.e. $X_t = B_{2t}$ where $B$ is 
a Brownian motion on $M$ starting at $x$).
Define $r:M \to [0,\infty)$
by $r(y) = d(x,y)$. Then
$r$ is differentiable $\pi$-almost everywhere
and there exists a non-decreasing process $L$
and a one-dimensional Euclidean 
Brownian motion $\beta$ such that
$$r(X_t) = \beta_{2t} +
 \int_0^{t} \Delta r (X_s) ds - L_t$$
for every $t \geq 0$ where $\Delta r$ is the
$\pi$-almost everywhere defined Laplacian
of $r$.
\end{lemma}

Applying Lemma \ref{lem: browniandistance}
and Itô's formula and then taking
expected values we get
	$$\mathbb E[r(X_t)^2]
	= 2\int_0^t 
	\mathbb E [r(X_s) \Delta r (X_s) ] ds
	- \mathbb E \left[
	2\int_0^t r(X_s) dL_s
	 \right] +
	2t
	\leq
	\int_0^t 
	2\mathbb E [r(X_s) \Delta r (X_s) ] ds
	+ 2 t
	$$
where we are using the notation of Lemma
\ref{lem: browniandistance}. By
\cite[Corollary 3.4.5]{stochastic-hsu}
we know that $r \Delta r$ is bounded in $M$
and as
$D_t(x) = \mathbb E[r(X_t)]$ 
we obtain (\ref{eq: Dinequality})
where the constant $C$ does not depend on $x$.
\end{proof}
\end{proposition}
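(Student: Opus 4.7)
The plan is to reduce the bound to a uniform estimate for a single point via the convexity of $W_1$, then convert the resulting one-dimensional distance question to a second-moment computation which I will attack probabilistically via the heat semigroup generated by $\Delta$.

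First I would observe that, since $W_1$ can be written as a supremum of the linear functionals $\mu\mapsto\int f\,d\mu-\int f\,d\nu$ with $\|\nabla f\|_\infty\le 1$, it is convex in each argument; in particular, writing $i_n(\vec x) = \frac1n\sum\delta_{x_i}$ and $R_t(\vec x) = \frac1n\sum\mu_{x_i}^t$, convexity gives
$$W_1(R_t(\vec x),i_n(\vec x)) \le \frac{1}{n}\sum_{i=1}^n W_1(\delta_{x_i},\mu_{x_i}^t).$$
It therefore suffices to prove a uniform-in-$x$ estimate $W_1(\delta_x,\mu_x^t)\le C\sqrt t$. Because one of the measures is a Dirac, the only coupling is the product, so $W_1(\delta_x,\mu_x^t) = \int_M d_g(x,y)\,p_t(x,y)\,d\pi(y)$; applying Jensen's inequality to the square root then reduces the problem to showing the uniform bound $D_t(x) := \int_M d_g(x,y)^2\,p_t(x,y)\,d\pi(y)\le C^2 t$.

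The key step is to recognize $D_t(x) = \mathbb E[r(X_t)^2]$, where $r(y) = d_g(x,y)$ and $X$ is the diffusion on $M$ generated by $\Delta$ started at $x$. I would invoke a radial semimartingale decomposition (a standard result in stochastic analysis on manifolds, as in Hsu) which yields a one-dimensional Brownian motion $\beta$ and a nondecreasing process $L$ with
$$r(X_t) = \beta_{2t} + \int_0^t \Delta r(X_s)\,ds - L_t.$$
Applying Itô's formula to $r(X_t)^2$ and taking expectations, the martingale part vanishes, the quadratic variation contributes $2t$, and the $dL_s$ contribution is nonpositive because $r\ge 0$ and $L$ is nondecreasing; hence
$$\mathbb E[r(X_t)^2] \le 2t + 2\int_0^t \mathbb E[r(X_s)\,\Delta r(X_s)]\,ds.$$
Compactness of $M$ together with a Laplacian comparison bound (which gives $r\,\Delta r$ bounded uniformly on $M\setminus\mathrm{cut}(x)$ with a constant independent of $x$) then yields $\mathbb E[r(X_t)^2]\le C^2 t$, as desired.

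The principal obstacle is analytic regularity at the cut locus: $r$ and $r^2$ fail to be smooth there, so Itô's formula cannot be applied naively. The bounded-variation term $L_t$ precisely captures the local-time contribution at the cut locus, and the argument depends on justifying that $r(X_s)\,dL_s\ge 0$ so that it can be discarded, and on the comparison estimate for $r\,\Delta r$ being uniform in the base point $x$. Once both facts are invoked from the cited sources, the rest of the argument is a routine chain of convexity and Jensen reductions.
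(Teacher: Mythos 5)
Your argument is essentially identical to the paper's: the same convexity reduction to a single point, the same Jensen step via the second moment $D_t(x)$, the same radial semimartingale decomposition from Hsu, and the same Itô computation with the local-time term dropped by sign and $r\,\Delta r$ bounded uniformly by Laplacian comparison. Your extra remarks on the cut-locus subtleties are correct and match the paper's implicit reliance on the cited results in Hsu.
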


Now we will look for $a_n$ of 
the hypotheses of Theorem
\ref{theo: general concentration}.

\subsection{
Comparison
between the regularized and 
the non-regularized energy}

\begin{theorem}[Comparison
between the regularized and 
the non-regularized energy]
\label{prop: an}
Let $d$ be the dimension of $M$.
If $d = 2$ there exists a constant $C > 0$
such that, for every $n \geq 2$,
$t \in (0,1]$ and $\vec x \in M^n$,
$$
	H_n(\vec x) 
	\geq  H (R_t(\vec x)) 
 	- 
 	t 
 	+ \frac{1}{8\pi n}\log(t) - \frac{C}{n}.
$$
If $d > 2$ there exists a constant $C > 0$
such that, for every $n \geq 2$,
$t \in (0,1]$ and $\vec x \in M^n$,
$$
	H_n(\vec x) 
	\geq H (R_t(\vec x)) 
 	- 
 	t 
 	- \frac{C}{n t^{\frac{d}{2} - 1}}.
 $$

\end{theorem}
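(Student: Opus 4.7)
My plan is to pass to a heat-kernel representation of the pair energy $H(R_t(\vec x))$ and isolate an explicit diagonal correction. Define the regularized kernel
\[
G_t(x,y) := \int\!\!\int G(y',z')\, p_t(x,y')\, p_t(y,z')\, d\pi(y')\, d\pi(z'),
\]
so that $H(R_t(\vec x)) = \frac{1}{2n^2} \sum_{i,j=1}^n G_t(x_i, x_j)$. Starting from $\Delta G_x = -\delta_x + 1$, the normalization $\int G_x\, d\pi = 0$ and $\partial_s p_s = \Delta_y p_s$, a short computation gives the probabilistic representation $G(x,y) = \int_0^\infty (p_s(x,y) - 1)\, ds$. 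Applying the semigroup property of $p$ in each of the two variables then yields the clean identity
\[
G_t(x,y) \;=\; \int_{2t}^{\infty} \bigl(p_s(x,y) - 1\bigr)\, ds,
\]
valid for all $x, y \in M$; for $x \neq y$ it rewrites as $G_t(x,y) = G(x,y) + 2t - \int_0^{2t} p_s(x,y)\, ds$.

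Next, I would substitute this identity into $H(R_t(\vec x))$ and split the double sum $\sum_{i,j}$ into the $n(n-1)$ off-diagonal pairs (which reconstruct $H_n(\vec x)$ up to explicit $t$-dependent corrections) and the $n$ diagonal terms. Elementary bookkeeping produces the exact equality
\[
H_n(\vec x) - H(R_t(\vec x)) \;=\; -t + \frac{t}{n} + \frac{1}{n^2}\sum_{i<j} \int_0^{2t} p_s(x_i,x_j)\, ds \;-\; \frac{1}{2n^2}\sum_{i=1}^n G_t(x_i,x_i).
\]
Since $p_s \geq 0$ by Proposition \ref{proposition: heat}, the middle sum is non-negative and may simply be dropped when lower-bounding $H_n - H(R_t)$. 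The entire problem therefore reduces to a uniform upper bound on the diagonal quantity $G_t(x,x) = \int_{2t}^{\infty}(p_s(x,x)-1)\, ds$.

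This last step is the main obstacle, and it is where the short-time heat kernel asymptotics enter decisively. I would invoke the Minakshisundaram-Pleijel expansion $p_s(x,x) = (4\pi s)^{-d/2}(1 + O(s))$, uniform in $x \in M$ as $s \downarrow 0$, together with the uniform exponential convergence $p_s \to 1$ for $s \geq 1$ provided by the spectral gap of $\Delta$. Splitting $\int_{2t}^{\infty} = \int_{2t}^{1} + \int_{1}^{\infty}$ and computing the small-$s$ piece explicitly then gives, uniformly in $x \in M$ and $t \in (0,1]$,
\[
G_t(x,x) \;\leq\; \begin{cases} -\dfrac{1}{4\pi}\log t + C & \text{if } d = 2, \\[4pt] C\, t^{\,1 - d/2} & \text{if } d \geq 3. \end{cases}
\]
Plugging these into the identity above (and discarding the non-negative term $t/n$) produces the two claimed inequalities: the coefficient $1/(8\pi)$ in dimension two arises from $-\tfrac{1}{2n}\cdot(-\tfrac{1}{4\pi}\log t) = \tfrac{\log t}{8\pi n}$, while the $n\, t^{d/2-1}$ denominator in higher dimensions comes directly from the integrated short-time singularity of the heat kernel on the diagonal.
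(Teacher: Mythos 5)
Your proposal is correct and follows essentially the same route as the paper: derive the integral representations $G(x,y)=\int_0^\infty(p_s(x,y)-1)\,ds$ and $G_t(x,y)=\int_{2t}^\infty(p_s(x,y)-1)\,ds$, discard the non-negative off-diagonal correction $\int_0^{2t}p_s(x_i,x_j)\,ds$ using $p_s\ge 0$, and control the diagonal term $G_t(x,x)$ uniformly via the Minakshisundaram--Pleijel short-time expansion together with the spectral-gap decay for large $s$. The only cosmetic difference is that you record an exact identity before dropping terms, whereas the paper splits the same content into Corollary \ref{cor: off-diagonal} and Proposition \ref{prop: diagonal}.
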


To compare
$H(	R_t(\vec x))$
and $H_n(\vec x)$  we will write,
for $\vec x = (x_1,...,x_n) \in M^n$,
	$$H(	R_t(\vec x)) =
	\frac{1}{n^2} \sum_{i < j}
	\int_{M\times M} G(\alpha,\beta)
	 d\mu_{x_i}^t(\alpha) d\mu_{x_j}^t(\beta) 
	 +
	\frac{1}{2 n^2} \sum_{i=1}^n
	\int_{M\times M} G(\alpha,\beta)
	 d\mu_{x_i}^t(\alpha) d\mu_{x_i}^t(\beta) .$$
Let us define
\begin{align*}
	G_t(x,y) &= \int_{M\times M}G(\alpha,\beta)
	 d\mu_x^t(\alpha) d\mu_y^t(\beta) \\
	 & = \int_{M\times M}G(\alpha,\beta)
	 p_t(x,\alpha) d\pi(\alpha) 
	 p_t(y,\beta) d\pi(\beta).
\end{align*}	 	 
Then we may write
	$$H(R_t(\vec x)) =
	\frac{1}{n^2} \sum_{i<j} G_t(x_i,x_j)
	+\frac{1}{2 n^2} \sum_{i=1}^n G_t(x_i,x_i) .$$
So we want to compare $G_t$ and $G$. 
The idea we shall use is that
if $G$ is the kernel
of the operator $\bar G$ and
$p_t$ is the kernel of the operator $\bar P_t$
then $G_t$ is the kernel of the operator
$\bar P_t \bar G \bar P_t.$
But using the eigenvector decomposition
we can see that
\begin{equation}
\label{eq: integral-green}
\bar G = 
\int_0^{\infty} \left( \bar P_s - e_0 \otimes e_0^* \right) ds
\end{equation}
where $e_0$ is the eigenvector of
eigenvalue $0$, i.e. the constant function 
equal to one. Then
\begin{equation}
\label{eq: integral-regularizedgreen}
\bar P_t \bar G \bar P_t
= \int_0^{\infty} \left( \bar P_{2t + s}
 - e_0 \otimes e_0^* \right) ds
\end{equation} 
where we have used the semigroup property of 
$t \mapsto \bar P_t$, the fact that
$\bar P_t e_0 = e_0$ and $\bar P_t^* = \bar P_t$.

We will prove the previous idea in a somehow 
different
but very related 
way. We begin by proving 
the analogue of (\ref{eq: integral-green}).

\begin{proposition}[Integral representation
of the Green function]
\label{prop: integralgreen}
For every pair of different points
$x,y \in M$ the function
$t \mapsto p_t(x,y)-1$ is integrable.
For every $x \in M$
the negative part of
the function $t \mapsto p_t(x,x)-1$ is integrable.
Moreover, we have the 
following integral representation
of the Green function.
For every $x,y \in M$
	$$G(x,y) = \int_0^\infty 
	\left(p_t(x,y) - 1 \right) dt .$$

\begin{proof}

To prove the integrability of
$t \mapsto p_t(x,y)-1$ we will need to know the 
behavior of $p_t$ for large and short $t$. For the large-time behavior
we have the following result.

\begin{lemma}[Large-time behavior]
\label{lem: large}
There exists $\lambda > 0$
such that for every $T > 0$, $s \geq 0$ 
and $x,y \in M$
\begin{equation}
\label{eq: large-time}
|p_{T+s}(x,y) - 1 | \leq
e^{-\lambda s}
\sqrt {|p_T(x,x) - 1 ||p_T(y,y) - 1 |}.
\end{equation}

\begin{proof}
We follow the same arguments as in the proof of
\cite[Corollary 3.17]{heat-grigoryan}.
By the semigroup property,
the symmetry of $p_t$ and
Cauchy-Schwarz inequality we get
\begin{equation}
\label{eq: cauchy}
\begin{split}
	\left|	p_{T+s}(x,y) - 1 \right|
	&=\left| 
	\int_M \left(p_{\frac{T+s}{2}}(x,z) - 1 \right)
			\left(p_{\frac{T+s}{2}}(z,y) - 1 \right)	 
			d\pi(z)
			\right|						\\
	&\leq 	
	\left\| p_{\frac{T+s}{2}}
	(x,\cdot)-1 \right\|_{L^2}
	\left\| p_{\frac{T+s}{2}}
	(y,\cdot)-1 \right\|_{L^2}		\\
\end{split}
\end{equation}
If $\lambda$ is the first strictly positive
eigenvalue of $-\Delta$ and if $f \in L^2(M)$
we get
	$$\left\|
	\int_M (p_{\frac{s}{2}}(\cdot,z) - 1)f(z) d\pi(z) 
	\right\|_{L^2}
	\leq e^{-\lambda \frac{s}{2}} 
	\left\| \, f - \int f d\pi\right\|_{L^2}.$$
If we choose $f = p_{\frac{T}{2}}(x,\cdot) - 1$
we obtain
\begin{equation}
\label{eq: forx}
	\left\| p_{\frac{T+s}{2}}(x,\cdot)-1
	\right\|_{L^2}
	\leq e^{-\lambda \frac{s}{2}}
		\left\| p_{\frac{T}{2}}
		(x,\cdot)-1\right\|_{L^2}
	=	 e^{-\lambda \frac{s}{2}}
	\sqrt {p_T(x,x)-1}.
\end{equation}	
where we have used the semigroup property
for the last equality.
Similarly, we get
\begin{equation}
\label{eq: fory}
	\left\| p_{\frac{T+s}{2}}(y,\cdot)-1
	\right\|_{L^2}
	\leq 
		 e^{-\lambda \frac{s}{2}}
	\sqrt {p_T(y,y)-1}.
\end{equation}	
By (\ref{eq: cauchy}),
(\ref{eq: forx}) and (\ref{eq: fory})
we may conclude (\ref{eq: large-time}).		
\end{proof}
\end{lemma}

For the short-time behavior, \cite[Theorem 5.3.4]{stochastic-hsu}
implies the following lemma.

\begin{lemma}[Short-time behavior]
\label{lem: short}
Let $d$ be the dimension of $M$. Then there exist
two positive constants $C_1$ and $C_2$ such that
for every $t \in (0,1)$ and $x,y \in M$ we have
$$\frac{C_1}{t^{\frac{d}{2}}} 
e^{-\frac{d(x,y)^2}{4t}}
\leq
p_t(x,y) \leq \frac{C_2}{t^{d- \frac{1}{2}}} 
e^{-\frac{d(x,y)^2}{4t}}.$$
\end{lemma}

The integrability of
$t \mapsto p_t(x,y) - 1$
when $x \neq y$ and the fact that
$\int_0^\infty (p_t(x,x) - 1)dt = \infty$
for every $x \in M$
can be obtained from
Lemma \ref{lem: short} and 
Lemma \ref{lem: large}.

Using Lemma \ref{lem: large}
and the dominated convergence theorem
we obtain the continuity of the function
$(x,y) \mapsto \int_1^\infty (p_t(x,y)-1)\, dt$
at any $(x,y) \in M\times M$.
By the dominated
convergence theorem and Lemma \ref{lem: short}
we obtain the continuity of the function
$(x,y) \mapsto \int_0^1 (p_t(x,y)-1)\, dt$
for $x \neq y$. Using Fatou's lemma we obtain
the continuity of
$(x,y) \mapsto \int_0^1 (p_t(x,y)-1)\, dt$
at $(x,y)$ such that $x = y$.
So, we get that
$(x,y) \mapsto \int_0^\infty (p_t(x,y)-1)\, dt$
is continuous at any $(x,y) \in M\times M$.

Define the continuous
function $K:M \times M \to (-\infty,\infty]$ by
	$$K(x,y) = \int_0^\infty (p_t(x,y) - 1) dt.$$
The following lemma assures
that $K(x,\cdot)$ is integrable
for every $x \in M$.

\begin{lemma}[Global integrability]
For every $x \in M$
$$\int_0^\infty \int_M
|p_t(x,y) - 1| d\pi(y) dt < \infty.$$
\begin{proof}
Take $T>0$.
By Lemma 
\ref{lem: large} we obtain that
$$\int_T^\infty \int_M
|p_t(x,y) - 1| d\pi(y) dt < \infty.$$
On the other hand we have
$$\int_0^T \int_M
|p_t(x,y) - 1| d\pi(y) dt 
\leq \int_0^T \int_M
(p_t(x,y) + 1)d\pi(y) dt
=2T < \infty.$$
\end{proof}
\end{lemma}

Let
$0=\lambda_0 < \lambda_1 \leq \lambda_2,...$ be 
the sequence of eigenvalues of
$- \Delta$ and
$e_0, e_1, e_2,...$ the sequence of
respective eigenfunctions. Then,
for every $\psi \in C^\infty(M)$
$$
\sum_{n=0}^\infty \exp(-\lambda_n t)
 |\langle e_n, \psi \rangle|^2
= \langle \psi, e^{t \Delta } \psi \rangle
=
\int_{M \times M}
 \psi(x) p_t(x, y) \psi(y) d\pi(x) d\pi(y).$$
Equivalently we have
$$
\sum_{n=1}^\infty \exp(-\lambda_n t)
 |\langle e_n, \psi \rangle|^2
=
\int_{M \times M} \psi(x) (p_t(x, y) -1 )
\psi(y) d\pi(x) d\pi(y)$$
and integrating in $t$ from
zero to infinity we obtain
$$
\sum_{n=1}^\infty \frac{1}{\lambda_n}
	|\langle e_n, \psi \rangle|^2
=
\int_{M \times M} \psi(x) K(x,y)
\psi(y) d\pi(x) d\pi(y).$$
By a polarization identity we have
that, for every $\phi, \psi \in C^\infty(M)$,
$$
\sum_{n=1}^\infty \frac{1}{\lambda_n}
	\langle \psi,e_n\rangle 
	\langle e_n, \phi \rangle
=
\int_{M \times M} \psi(x) K(x,y)
\phi(y) d\pi(x) d\pi(y).$$
Taking $\phi = \Delta \alpha$ we get
$$\langle \psi ,\alpha \rangle
- \int_M \psi d\pi \int_M \alpha d\pi
=
\sum_{n=1}^\infty
	\langle \psi,e_n\rangle 
	\langle e_n, \alpha \rangle
=
\int_{M \times M} \psi(x) K(x,y)
\Delta \alpha(y) d\pi(x) d\pi(y).$$
By definition of the Green function 
we know that
$\int_M G(x,y) \Delta \alpha(y) d\pi(y)
= - \alpha(x) + \int_M \alpha d\pi$
and thus
$$\int_{M \times M} \psi(x) G(x,y)
\Delta \alpha(y) d\pi(x) d\pi(y)
=
\int_{M \times M} \psi(x) K(x,y)
\Delta \alpha(y) d\pi(x) d\pi(y).$$
As $\int_M K(x,y) d\pi(y) = 0=
\int_M G(x,y) d\pi(y) $
and by the continuity of $K$ and $G$
we obtain
$G(x,y) = K(x,y)$ for every 
$x, y \in M$.

\end{proof}
\end{proposition}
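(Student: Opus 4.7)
The plan is to establish the integral representation in three stages: first, derive quantitative decay estimates on $p_t(x,y)-1$ for small and large $t$; second, use these to define the candidate kernel $K(x,y) = \int_0^\infty (p_t(x,y) - 1)\, dt$ and verify its regularity; third, show that $K$ coincides with $G$ via spectral expansion.

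For the integrability, I would split the time integral at some fixed $T>0$. On the tail $[T,\infty)$, the spectral gap $\lambda_1 > 0$ of $-\Delta$ combined with the semigroup property, the symmetry $p_t(x,y)=p_t(y,x)$, and Cauchy--Schwarz yields an estimate of the form $|p_{T+s}(x,y)-1| \leq e^{-\lambda_1 s}\sqrt{(p_T(x,x)-1)(p_T(y,y)-1)}$, giving exponential, hence integrable, decay. On $(0,T]$ with $x \neq y$, the Gaussian upper bound provides a factor $e^{-d_g(x,y)^2/(4t)}$ that dominates the polynomial blow-up in $t^{-1}$, so the integral near $t=0$ converges. For the diagonal, the Gaussian lower bound gives $p_t(x,x) \geq C_1 t^{-d/2} \geq 1$ for $t$ small, so the negative part $(p_t(x,x)-1)^-$ is supported away from zero and its integrability reduces to the tail estimate.

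The identification $K = G$ proceeds via the spectral decomposition of $\Delta$. Writing $\{e_n\}_{n\geq 0}$ for an orthonormal basis of eigenfunctions of $-\Delta$ with eigenvalues $0 = \lambda_0 < \lambda_1 \leq \lambda_2 \leq \cdots$, Parseval's identity yields, for $\psi,\phi \in C^\infty(M)$, the expression $\int \psi(x)(p_t(x,y)-1)\phi(y)\,d\pi(x)\,d\pi(y) = \sum_{n \geq 1} e^{-\lambda_n t}\langle \psi, e_n\rangle\langle e_n,\phi\rangle$. Integrating in $t$ and then substituting $\phi = \Delta \alpha$ for $\alpha \in C^\infty(M)$, the right-hand side telescopes to $\langle \psi,\alpha\rangle - (\int \psi\, d\pi)(\int \alpha\, d\pi)$, which matches the defining property of $G$ tested against $\psi$. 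Combined with the normalizations $\int_M K(x,\cdot)\, d\pi = 0 = \int_M G(x,\cdot)\, d\pi$ and the continuity of both kernels, arbitrariness of $\psi$ and $\alpha$ forces $K=G$ pointwise.

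The main obstacle is the rigorous justification of the Fubini exchange between the $t$-integral and the $\pi$-integrations, which requires a global integrability statement of the form $\int_0^\infty \int_M |p_t(x,y)-1|\, d\pi(y)\, dt < \infty$. I would obtain this by splitting at $T$: on $[0,T]$ the integrand is bounded by $p_t(x,y)+1$ and integrates to $2T$ by mass preservation, while on $[T,\infty)$ the exponential decay from the spectral gap suffices. A secondary technical point is to establish the continuity of $(x,y) \mapsto K(x,y)$ on $M\times M$ (valued in $(-\infty,\infty]$) using the two behavioral estimates together with dominated convergence and Fatou's lemma, so that the identity of $K$ and $G$ in a distributional sense upgrades to pointwise equality including at the diagonal.
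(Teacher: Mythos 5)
Your proposal is correct and follows essentially the same route as the paper's proof: a large-time decay estimate via the spectral gap, semigroup property, and Cauchy--Schwarz; a short-time estimate from two-sided Gaussian bounds on $p_t$; a global integrability bound (mass preservation near $t=0$, exponential decay in the tail) to justify Fubini; continuity of $K$ by dominated convergence and Fatou; and identification $K=G$ through the eigenfunction expansion with the test $\phi=\Delta\alpha$, the common normalization $\int K(x,\cdot)\,d\pi=0=\int G(x,\cdot)\,d\pi$, and continuity.
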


Now we will state and prove
(\ref{eq: integral-regularizedgreen}).

\begin{proposition}
[Integral representation of the regularized Green
function]
\label{prop: integralregularizedgreen}
For every $t>0$ and
$x,y \in M$
$$	G_t(x,y) = 	
	\int_{2t}^\infty
	\left( p_{s}(x,y) - 1  \right) ds.
$$
\begin{proof}

Take the time (i.e. with respect to t) derivative
(denoted by a dot above the function).
	$$\dot G_t(x,y)
	=
	\int_{M\times M}
	 \dot p_t(x,\alpha) G(\alpha,\beta) p_t(y,\beta) 
	  d\pi(\alpha) 
	 d\pi(\beta)
	 +
	 \int_{M\times M}
	 \dot p_t(x,\alpha) G(\alpha,\beta) 
	 \dot p_t(y,\beta) 
	  d\pi(\alpha) 
	 d\pi(\beta).	 $$	 
We will study the first term of the sum 
(the second being analogous).	
\begin{align*}
	\int_{M\times M}
	 \dot p_t(x,\alpha) G(\alpha,\beta) p_t(y,\beta) 
	  d\pi(\alpha) 
	 d\pi(\beta) 
	 & =
	 \int_{M\times M}
	 \Delta_{\alpha}
	 p_t(x,\alpha) G(\alpha,\beta) p_t(y,\beta) 
	  d\pi(\alpha) 
	 d\pi(\beta) 						\\
	 & =
	 \int_M \left(\int_M
	 \Delta_{\alpha}
	 p_t(x,\alpha) G(\alpha,\beta) 
	 d\pi(\alpha) \right) 
	 p_t(y,\beta)  d\pi\beta) 				\\
	 & =
	 \int_M \left(\int_M
	 p_t(x,\alpha) 	\Delta_{\alpha} G(\alpha,\beta) 
	 d\pi(\alpha) \right) 
	 p_t(y,\beta)  d\pi(\beta) 				\\
	 & = \int_M \left(\int_M
	 p_t(x,\alpha) 	
	 \left(-\delta_{\beta}(\alpha) +  1 \right)
	 d\pi(\alpha) \right) 
	 p_t(y,\beta)  d\pi(\beta) 				\\
	 & = \int_M \left( - p_t(x,\beta) + 1  \right) 
	 p_t(y,\beta)  d\pi(\beta) 				\\	
	 & = - p_{2t}(x,y) + 1\\
\end{align*}
where in the last line we have used
the symmetry and the semigroup
property of $p$.
Using again the symmetry of $p$ we get 
	$$\dot G_t(x,y)=	 - 2 p_{2t}(x,y) + 2, $$
and by integrating we obtain
	$$G_t(x,y) - G_s(x,y)= \int_s^t 
	\left( - 2 p_{2u}(x,y) + 2  \right) du
	=
	\int_{2s}^{2t} 
	\left( - p_{s}(x,y) + 1  \right) ds$$	
for every $0 < s < t <\infty$.
As a consequence of the uniform convergence
of Proposition \ref{proposition: heat}
we can see that
$\mu_x^t$ and 
$\mu_y^t$ defined
in (\ref{eq: mu}) converge
to $\pi$ as $t$ goes to infinity. 
Fix any $T>0$. As
$G_{T+s}(x,y) = 
\int G_T(\alpha,\beta) d\mu_x^s(\alpha)
d\mu_y^s(\beta)
$ for any $s > 0$
and as $G_T$ is continuous
we obtain	
$\lim_{t \to \infty} G_t(x,y) = 
\int_{M \times M} G_T(x,y) d\pi(x) d\pi(y)=0$ 
and then
$$G_t(x,y) = \int_{2t}^\infty (p_s(x,y) - 1) ds.$$
	 
\end{proof}
\end{proposition}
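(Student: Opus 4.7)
The plan is to prove the identity by differentiating $G_t(x,y)$ with respect to $t$, using the PDE structure of $p$ and the defining property of the Green function, and then integrating back after checking the correct boundary behavior as $t\to\infty$. This matches the heuristic expressed in \eqref{eq: integral-green}--\eqref{eq: integral-regularizedgreen}, rigorously implemented at the level of kernels.

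First I would fix $x,y\in M$ and justify, using smoothness and positivity of $p_t$ together with the integrability of $G$ and standard dominated-convergence arguments, that we may differentiate under the integral in $G_t(x,y) = \int_{M\times M} G(\alpha,\beta)\, p_t(x,\alpha)\, p_t(y,\beta)\, d\pi(\alpha)\, d\pi(\beta)$. Using $\partial_t p_t = \Delta p_t$ this yields two symmetric terms, namely
\[
\dot G_t(x,y)
= \int_{M\times M}\! \Delta_\alpha p_t(x,\alpha)\, G(\alpha,\beta)\, p_t(y,\beta)\, d\pi(\alpha)\, d\pi(\beta)
+ \int_{M\times M}\! p_t(x,\alpha)\, G(\alpha,\beta)\, \Delta_\beta p_t(y,\beta)\, d\pi(\alpha)\, d\pi(\beta).
\]

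Next I would compute one of these terms by sliding the Laplacian from $p_t$ onto $G$ (no boundary contributions since $M$ is compact and $p_t(x,\cdot)$ is smooth), and then invoke $\Delta_\alpha G(\alpha,\beta) = -\delta_\beta(\alpha) + 1$ in the distributional sense. The inner integral in $\alpha$ collapses to $-p_t(x,\beta) + 1$, and then the semigroup and symmetry properties in Proposition~\ref{proposition: heat} give $\int_M p_t(x,\beta) p_t(y,\beta)\, d\pi(\beta) = p_{2t}(x,y)$. Combining this with the symmetric second term gives $\dot G_t(x,y) = -2p_{2t}(x,y) + 2$, and integrating in $t$ yields $G_t(x,y) - G_s(x,y) = \int_{2s}^{2t}\bigl(1 - p_u(x,y)\bigr)\, du$ for $0 < s < t$.

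The last step is to send $t\to\infty$ with $s$ fixed. For this I need $\lim_{t\to\infty} G_t(x,y) = 0$. I would argue as follows: by the uniform convergence $p_t \to 1$ from Proposition~\ref{proposition: heat}, the measures $\mu_x^t$ and $\mu_y^t$ converge weakly to $\pi$ as $t\to\infty$; writing $G_{T+s}(x,y) = \int_{M\times M} G_T(\alpha,\beta)\, d\mu_x^s(\alpha)\, d\mu_y^s(\beta)$ for any fixed $T>0$, and using continuity of $G_T$ together with the normalization $\int_M G_x\, d\pi = 0$ from \eqref{eq: zerointegral} (which propagates to $G_T$ via Fubini), we obtain $\lim_{t\to\infty} G_t(x,y) = 0$. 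Substituting this into the integrated identity and renaming $s$ as $t$ gives the announced formula.

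The main obstacle I expect is the distributional step $\Delta_\alpha G(\alpha,\beta) = -\delta_\beta(\alpha) + 1$ inside an integral against a smooth kernel, since $G$ has a singularity on the diagonal; this has to be handled either via \eqref{eq: solution} applied with the smooth test function $f = p_t(x,\cdot)$, or by approximation. Everything else (differentiation under the integral, the semigroup collapse, and the large-time limit of $G_t$) is routine provided the short-time and large-time heat-kernel bounds of Lemmas~\ref{lem: short} and~\ref{lem: large} are invoked to dominate the integrands.
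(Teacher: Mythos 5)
Your proposal is correct and follows essentially the same route as the paper: differentiate $G_t$ in $t$ using $\dot p_t = \Delta p_t$, integrate by parts to move the Laplacian onto $G$, apply the distributional identity $\Delta_\alpha G(\alpha,\cdot) = -\delta_\alpha + 1$ to collapse the inner integral, use symmetry and the semigroup property to get $\dot G_t = -2p_{2t}+2$, and integrate from $s$ to $t$ before sending $t\to\infty$ via the weak convergence $\mu_x^t \to \pi$ and the normalization $\int_M G_x\,d\pi = 0$. (Incidentally, your statement of the product rule for $\dot G_t$ corrects a typo in the paper, where the second term erroneously repeats $\dot p_t$ in place of $p_t$.)
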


Using
Proposition 
\ref{prop: integralgreen}
and \ref{prop: integralregularizedgreen} we conclude
the following 
inequality.
We can find an analogous
result in \cite[Lemma 5.2]{arakelov-lang}.

\begin{corollary}[Off-diagonal behavior]
\label{cor: off-diagonal}
For every $n \geq 2$, $t >0$ and
$(x_1,...,x_n) \in M^n$
	$$	\sum_{i<j} G(x_i,x_j) 
		\geq \sum_{i<j} G_t(x_i, x_j) -
		t \, n^2.$$
\begin{proof}
As the heat kernel is non-negative,
by Proposition \ref{prop: integralgreen}
and \ref{prop: integralregularizedgreen} we have
that, for every $x, y \in M$,
	$$G(x,y) - G_t(x,y)
	= \int_0^{2t} \left(p_s(x,y) - 1 \right)ds
	\geq -2t .$$
Then, if $(x_1,...,x_n) \in M^n$,
	$$	\sum_{i<j} G(x_i,x_j) 
		\geq \sum_{i<j} G_t(x_i, x_j) -
		t \, n(n-1)
		\geq \sum_{i<j} G_t(x_i, x_j) -
		t \, n^2$$
\end{proof}
\end{corollary}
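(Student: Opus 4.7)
The plan is to reduce the claim to a pointwise lower bound on $G(x,y) - G_t(x,y)$ and then sum. The two integral representations of $G$ and $G_t$ established in Propositions \ref{prop: integralgreen} and \ref{prop: integralregularizedgreen} fit together perfectly: the first expresses $G(x,y)$ as $\int_0^\infty (p_s(x,y)-1)\,ds$, while the second expresses $G_t(x,y)$ as $\int_{2t}^\infty (p_s(x,y)-1)\,ds$. Subtracting gives the clean identity
\begin{equation*}
G(x,y) - G_t(x,y) = \int_0^{2t} \bigl(p_s(x,y) - 1\bigr)\,ds,
\end{equation*}
so the whole question becomes how negative this short-time integral can be.

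Here the non-negativity of the heat kernel (stated in Proposition \ref{proposition: heat}) does all the work: for every $s>0$ and every $x,y \in M$ we have $p_s(x,y) - 1 \geq -1$, hence
\begin{equation*}
G(x,y) - G_t(x,y) \;\geq\; \int_0^{2t} (-1)\,ds \;=\; -2t.
\end{equation*}
This is the only analytic input needed; nothing about the singularity of $G$ on the diagonal or about the short-time asymptotics from Lemma \ref{lem: short} enters.

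Finally I would sum this pointwise inequality over the $\binom{n}{2} = n(n-1)/2$ pairs $i<j$ evaluated at $(x_i,x_j)$, obtaining
\begin{equation*}
\sum_{i<j} G(x_i,x_j) \;\geq\; \sum_{i<j} G_t(x_i,x_j) - 2t \cdot \frac{n(n-1)}{2} \;=\; \sum_{i<j} G_t(x_i,x_j) - t\,n(n-1),
\end{equation*}
and then use the trivial bound $n(n-1) \leq n^2$ to get the stated form $-t\,n^2$ on the right-hand side. There is no real obstacle: once the two integral representations of the previous propositions are in hand, the corollary is a one-line consequence, and the slight looseness between $n(n-1)$ and $n^2$ is harmless because it will be absorbed into the $a_n$ term of Theorem \ref{theo: general concentration} in the sequel.
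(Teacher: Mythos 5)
Your proof is correct and follows exactly the same route as the paper: subtract the integral representations from Propositions \ref{prop: integralgreen} and \ref{prop: integralregularizedgreen} to obtain $G(x,y)-G_t(x,y)=\int_0^{2t}(p_s(x,y)-1)\,ds\geq -2t$ via non-negativity of the heat kernel, then sum over the $\binom{n}{2}$ pairs and bound $n(n-1)\leq n^2$. There is nothing to add.
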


What is left to understand is
$\sum_{i=1}^n G_t(x_i, x_i)$.
This will be achieved using Proposition
\ref{prop: integralregularizedgreen}
and the short-time asymptotic 
expansion
of the heat kernel.
A particular case
is 
mentioned in \cite[Lemma 5.3]{arakelov-lang}.

\begin{proposition}[Diagonal behavior]
\label{prop: diagonal}
Let $d$ be the dimension of $M$.
If $d = 2$ there exists
a constant $C>0$ such that
for every $t \in (0,1]$ and $x \in M$
$$G_t(x,x) \leq -\frac{1}{4\pi}\log(t) + C.$$
If $d > 2$ there exists
a constant $C>0$ such that
for every $t \in (0,1]$ and $x \in M$
	$$G_t(x,x) \leq 
	\frac{C}{t^{\frac{d}{2} - 1}}.$$
\begin{proof}	
	
By the asymptotic expansion of the heat
kernel (see for instance
\cite[Chapter VI.4]{eigenvalues-chavel}) we have that there exists a constant
$\tilde C>0$ (independent of $x$ and $t$) such that,
for $t \leq 1$,

	$$\left|
	p_t(x,x) - \frac{1}{(4\pi t)^{\frac{d}{2}}}
	\right|
	\leq 
	\tilde C t^{-\frac{d}{2} + 1}.
	$$
Then,
\begin{equation}
\label{eq: asymptotic inequality}
	p_t(x,x) \leq \frac{1}{(4\pi t)^{\frac{d}{2}}}
	+
	\tilde C t^{-\frac{d}{2} + 1}.
\end{equation}	
We know by Proposition \ref{prop: 
integralregularizedgreen}
that 
\begin{align*}
	G_t(x,x)	&= \int_{2t}^\infty 
				(p_s(x,x)-1) ds		\\
			&= \int_{2t}^2 (p_s(x,x) - 1) ds
				+ \int_2^\infty 
				(p_s(x,x) - 1) ds		\\
			&\leq \int_{2t}^2
			\left[\frac{1}{(4\pi s)^{\frac{d}{2}}}
			+
			\tilde C s^{-\frac{d}{2} + 1} - 1
			\right]ds
			+ \int_2^\infty 
			(p_s(x,x) - 1) ds			\\
			&=\int_{2t}^2
			\left[\frac{1}{(4\pi s)^{\frac{d}{2}}}
			+
			\tilde C s^{-\frac{d}{2} + 1}\right]ds
			+ G_2(x,x)
\end{align*}
In the case $d=2$ we obtain that, for
$t \in (0,1]$,
	$$G_t(x,x) \leq -\frac{1}{4\pi} \log(t) + 
	C$$
where $C$ is $2 \tilde C$
plus a bound for $G_2(x,x)$ independent of $x$.	
In the case $d>2$ we use that
$s^{-d/2 + 1} \leq 2s^{-d/2}$
for $s \in (0,1]$ and that $G_2(x,x)$ is
bounded from above to obtain
a constant $C$ such that, for
$t \in (0,1]$,
	$$G_t(x,x) \leq 
	\frac{C}{t^{\frac{d}{2} - 1}}.$$
	
\end{proof}	
\end{proposition}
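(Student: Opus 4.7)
The plan is to use the integral representation from Proposition \ref{prop: integralregularizedgreen}, namely
$$G_t(x,x) = \int_{2t}^{\infty}\bigl(p_s(x,x) - 1\bigr)\,ds,$$
and split the integral at some fixed time (say $s=2$) into a short-time part and a long-time part. The long-time part $\int_{2}^{\infty}(p_s(x,x)-1)\,ds$ is bounded uniformly in $x$: on the compact manifold $M$ the function $x \mapsto G_2(x,x)$ is continuous (by the continuity statements already established in the proof of Proposition \ref{prop: integralgreen}, or directly via Lemma \ref{lem: large}, which gives exponential decay of $|p_s(x,x)-1|$ in $s$ with a bound uniform in $x$). So this piece contributes only an additive constant, absorbable in the final $C$.

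The short-time part $\int_{2t}^{2}(p_s(x,x)-1)\,ds$ is where the divergence in $t$ comes from, and I would handle it by invoking the classical uniform on-diagonal short-time expansion of the heat kernel: there exists a constant $\tilde C > 0$, independent of $x \in M$ and $s \in (0,1]$, such that
$$\Bigl|p_s(x,x) - (4\pi s)^{-d/2}\Bigr| \le \tilde C \, s^{1 - d/2}.$$
This gives the upper bound $p_s(x,x) \le (4\pi s)^{-d/2} + \tilde C s^{1-d/2}$. Integrating over $s \in [2t, 2]$ and discarding the $-1$ (which only subtracts a bounded quantity) produces the announced behavior.

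For the integration, in dimension $d=2$ the dominant term is $\int_{2t}^{2}(4\pi s)^{-1}\,ds = -\tfrac{1}{4\pi}\log(t) + \text{const}$, while the correction $\int_{2t}^{2} \tilde C\, ds$ is bounded. In dimension $d>2$ the dominant term is $\int_{2t}^{2}(4\pi s)^{-d/2}\,ds$, which is of order $t^{1-d/2}$; the other term $\int_{2t}^{2}\tilde C s^{1-d/2}\,ds$ is of the same or smaller order (using $s^{1-d/2} \le 2 s^{-d/2}$ on $(0,1]$, as the author hints). Collecting everything gives the two stated inequalities.

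The only delicate point is ensuring that the constants from the short-time asymptotic expansion and from the long-time bound are genuinely uniform in $x \in M$; both follow from standard heat-kernel theory on a compact Riemannian manifold (the on-diagonal expansion holds uniformly via Minakshisundaram--Pleijel, and long-time uniformity follows from Lemma \ref{lem: large} together with compactness). No serious obstacle beyond bookkeeping is expected; the rest is a routine integration.
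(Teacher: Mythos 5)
Your argument is correct and follows essentially the same route as the paper: the same integral representation, the same split at $s=2$, the same uniform on-diagonal Minakshisundaram--Pleijel expansion $|p_s(x,x)-(4\pi s)^{-d/2}|\le\tilde C s^{1-d/2}$, and the same final integration (including the $s^{1-d/2}\le 2 s^{-d/2}$ trick for $d>2$). Nothing to add.
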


Knowing the diagonal and off-diagonal
behavior of the regularized 
Green function we can proceed to
prove Proposition \ref{prop: an}.

\begin{proof}
[Proof of Proposition \ref{prop: an}]
Take $\vec x = (x_1,...,x_n) \in M^n$. Then
if $d = 2$ we have
\begin{align*}
	H_n(\vec x) 
	&\geq \frac{1}{n^2} \sum_{i<j} G_t(x_i,x_j)
	- 
 	t 											\\
 	&\geq  
 	\frac{1}{n^2} \sum_{i<j} G_t(x_i,x_j)
 	- 
 	t 
 	+ \frac{1}{2 n^2}\sum_{i=1}^n G_t(x_i,x_i) 
 	+ \frac{1}{8 \pi n}\log(t) - \frac{1}{2n} C	\\
 	&= H (R_t(\vec x)) 
 	- 
 	t 
 	+ \frac{1}{8\pi n}\log(t) - \frac{1}{2n}  C.
\end{align*} 	
where we have used Corollary \ref{cor: off-diagonal}
and Proposition \ref{prop: diagonal}.
If $d > 2$ we proceed in the same way to get
\begin{align*}
	H_n(\vec x) 
	&\geq \frac{1}{n^2} \sum_{i<j} G_t(x_i,x_j)
	- 
 	t 											\\
 	&\geq  
 	\frac{1}{n^2} \sum_{i<j} G_t(x_i,x_j)
 	- 
 	t 
 	+ \frac{1}{2 n^2}\sum_{i=1}^n G_t(x_i,x_i) 
 	+ \frac{ C}{2 n t^{\frac{d}{2} - 1}}	\\
 	&= H (R_t(\vec x)) 
 	- 
 	t 
 	+ \frac{ C}{2 n t^{\frac{d}{2} - 1}}.
\end{align*}	

\end{proof}

Having acquired all the tools to apply
Theorem \ref{theo: general concentration} 
to the case of a Coulomb gas on a compact
Riemannian manifold,
the next section will be devoted
to prove
the main theorem and its almost immediate
extension.

\section{Proof of the concentration inequality
for Coulomb gases}

\label{sec: final}

\begin{proof}[Proof of Theorem \ref{theo: maintheorem}]

First, we notice that
$e_n = \int_M H_n d\mu_{\mathrm{eq}} = \frac{n-1}{n} e
= 0$. To use
Theorem \ref{theo: general concentration} we define
$f(r) = \frac{r^2}{2}$ and
$R = R_{n^{-\frac{2}{d}}}$.
If $d = 2$, by Proposition
\ref{prop: an} and \ref{prop: bn}, we have
that there exists a constant $\tilde C>0$ such that
	$$H_n(\vec x) 
		\geq H ( R(\vec x)) 
 		- \frac{1}{8\pi n}\log(n)
 		- \frac{\tilde C}{n}$$
	$$W_1(R(\vec x), i_n(\vec x))
		\leq \frac{\tilde C}{\sqrt n} $$
for every $\vec x \in M^n$ and
$n\geq 2$ so we can apply
Theorem \ref{theo: general concentration} to obtain
the desired result with $C = \frac{\tilde C^2}{2} + 
\tilde C$.
Similarly, if $d > 2$, by Proposition
\ref{prop: an} and \ref{prop: bn}, we have
that there exists a constant $\tilde C>0$
such that
$$
	H_n(\vec x) 
	\geq H ( R(\vec x)) 
 	- \frac{\tilde C}{n^{\frac{2}{d}}}$$
$$W_1(R(\vec x), i_n(\vec x))
	\leq \frac{\tilde C}{n^{\frac{1}{d}}} $$
for every $\vec x \in M^n$ and
$n\geq 2$ so we we can apply
Theorem \ref{theo: general concentration} to obtain
the desired result with $C = \frac{\tilde C^2}{2} + 
\tilde C$.
\end{proof}

Finally we present the proof of
 \ref{theo: potential}.

\begin{proof}[Proof of Theorem \ref{theo: potential}]

To apply Theorem \ref{theo: general concentration}
we notice that
Assumption \ref{eq: A} is satisfied by
$f(r) = \frac{r^2}{2}$.
Indeed Theorem
\ref{theo: assumption H compact manifold}
is still true for this new $H$ 
except for the caracterization
of the minimizer.
We obtain, in particular, that
$H$ has a unique minimizer.
By a calculation we can see that
$e - e_n = \frac{1}{2n}\int_{M \times M} 
G(x,y) d\mu_{eq}(x)d\mu_{eq}(y)$ which
is of order $\frac{1}{n}$ and will be absorbed
by the constant $C$. To meet
the hypotheses of Theorem
\ref{theo: general concentration}, we need to compare
	$$\frac{1}{n}\sum_{i=1}^n V(x_i) \ \ \ \mbox{ and }
	\ \ \ 
	\frac{1}{n}\sum_{i=1}^n \int_M V d\mu_{x_i}^t.$$
By using the relation
	$$\mathbb E [V(X_t)] = V(x) + 
	\int_0^t \mathbb E[\Delta f (X_s)] ds$$
where $X_t$ is the Markov process with generator
$\Delta$ starting at $x$ we obtain
	$$\left|\mathbb E [V(X_t)] - V(x) \right|
	\leq 
	\hat C t $$
where $\hat C$ is some upper bound to $\Delta V$
and thus
	$$\left|
	\frac{1}{n}\sum_{i=1}^n \int_M V d\mu_{x_i}^t
	-
	\frac{1}{n}\sum_{i=1}^n V(x_i)
	\right| \leq \hat C t.$$
In conclusion, if we choose
$R = R_{n^{-\frac{2}{d}}}$,
there still exists a constant
$C > 0$ such that
	$$
	H_n(\vec x) 
	\geq H ( R(\vec x)) 
 	- \frac{1}{8\pi n}\log(n) - \frac{C}{n}$$
in dimension two and
 	$$
	H_n(\vec x) 
	\geq H ( R(\vec x)) 
	- \frac{ C}{n^{\frac{2}{d}}}$$
in dimension greater than two so that
we can apply Theorem \ref{theo: general concentration}.

\end{proof}

\ \\
\textsc{CEREMADE, UMR CNRS 7534 Université 
Paris-Dauphine, PSL Research university, Place du Maréchal
de Lattre de Tassigny 75016 Paris, France.}
\par
  \textit{E-mail address}: \texttt{garciazelada@ceremade.dauphine.fr}

\end{document}